\tikzset{every loop/.style={min distance=10 mm, in=60, out=120, looseness=10}}
\newcommand{\bk}{\backslash}
\newcommand{\lm}{\lambda}
\newcommand{\mt}{\widetilde{m}}
\newcommand{\Lm}{\Lambda}
\newcommand{\tm}{\widetilde{m}}
\newcommand{\bx}{\bold{x}}
\newcommand{\by}{\bold{y}}
\newcommand{\ob}{\overline}
\newcommand{\eps}{\epsilon}
\newcommand{\sgn}{\text{sgn}}
\newcommand{\mf}{\mathcal{F}}
\newtheorem{lemma}[]{Lemma}
\newtheorem{cor}[lemma]{Corollary}
\newtheorem{theorem}[lemma]{Theorem}
\newtheorem{definition}[lemma]{Definition}
\title{Plethysms of Chromatic and Tutte Symmetric Functions}
\author{Logan Crew, Sophie Spirkl\footnote{Department of Combinatorics \& Optimization, University of Waterloo, Waterloo, ON, N2L 3E9.\newline  Emails:  lcrew@uwaterloo.ca, sspirkl@uwaterloo.ca \newline Spirkl: We acknowledge the support of the Natural Sciences and Engineering Research Council of Canada (NSERC), [funding
reference number RGPIN-2020-03912]. Cette recherche a \'et\'e financ\'ee par le Conseil de recherches en sciences naturelles et en g\'enie du Canada (CRSNG),
[num\'ero de r\'ef\'erence RGPIN-2020-03912].
}
}
\date{\today}
\begin{document}

\maketitle

\begin{abstract}

Plethysm is a fundamental operation in symmetric function theory, derived directly from its connection with representation theory. However, it does not admit a simple combinatorial interpretation, and finding coefficients of Schur function plethysms is a major open question.

In this paper, we introduce a graph-theoretic interpretation for any plethysm based on the chromatic symmetric function. We use this interpretation to give simple proofs of new and previously known plethystic identities, as well as chromatic symmetric function identities.
    
\end{abstract}

\section{Introduction}

Throughout the 1900s, the theory of symmetric functions has been greatly researched and expanded due in large part to its natural connections with representation theory. In particular, it is well-known that there exists a linear map known as the Frobenius characteristic $\mathcal{F}: CF \rightarrow \Lm$ from class functions of finite symmetric groups to symmetric functions such that for each integer partition $\lm$, the irreducible character $\chi^{\lm}$ is mapped to the Schur function $s_{\lm}$, and that this map $\mathcal{F}$ is a ring isomorphism, and in fact an isometry with respect to the classical inner products defined on the respective spaces \cite[Chapter 7.18]{stanleybook}.

In the representation theory of finite groups, there are two natural operations to combine characters $\phi$ and $\psi$ in a product-like way. One is the outer tensor product $\phi \otimes \psi$, and it is easy to verify that this corresponds under $\mathcal{F}$ to the product of symmetric functions $\mf(\phi)\mf(\psi)$. The other is composition, in the case where $\phi \circ \psi$ is defined (most often when working over representations of a general linear group to itself); this corresponds to the symmetric group operation of \emph{plethysm}, denoted $\mf(\phi)[\mf(\psi)]$.

Building from this intuition, it is possible to define the operation of plethysm in a way that extends to a definition $f[g]$ valid for any symmetric functions $f$ and $g$. This operation is fundamental to modern symmetric function theory research; many problems and results are best stated and studied through plethystic simplifications, such as questions related to LLT polynomials \cite{lolli, perLLT}, Macdonald polynomials \cite{corteel, delta}, and Hall-Littlewood polynomials \cite{hall, hall2}.

However, there is no simple combinatorial model for plethysm. Indeed, a plethysm of Schur functions is known to be Schur-positive from representation theory, but there is no known combinatorial formula for these coefficients (known as Kronecker coefficients) except in special cases, and searching for such an interpretation remains an extremely active area of research \cite{kron, kronecker}. Various combinatorial interpretations for plethysm have been suggested: Nava and Rota give a set-theoretic definition in terms of objects called \emph{partitionals} \cite{partitionals}, while Carr\'{e} and Leclerc consider crystal structures of tableaux diagrams, particularly domino tableaux \cite{carre}.

In this work, we give a new combinatorial interpretation for plethysm via graph theory using the \emph{chromatic symmetric function} $X_G$ of a graph $G$. This function was introduced by Stanley in 1995 \cite{stanley} as a symmetric function analogue of the chromatic polynomial $\chi_G$, and the study of this function has seen a resurgence in the last ten years, based largely around the Stanley-Stembridge conjecture claiming that the chromatic symmetric function of unit interval graphs is $e$-positive \cite{centered, huh, foley, newton} and the conjecture that the chromatic symmetric function distinguishes trees \cite{marked, transplanting, martin}. In \cite{delcon}, the authors extended Stanley's chromatic symmetric function $X_G$ to \emph{vertex-weighted graphs} $(G,w)$, which consist of a graph $G$ and a weight function $w: V(G) \rightarrow \mathbb{Z}^+$, as
$$
X_{(G,w)} = \sum_{\kappa \text{ proper}} \prod_{v \in V(G)} x_{\kappa(v)}^{w(v)}
$$
where the sum ranges over all proper colourings $\kappa: V(G) \rightarrow \mathbb{Z}^+$. This extended function retains basis expansions and properties of $X_G$ while admitting a simple edge deletion-contraction relation that was missing from $X_G$. It is this extended function that we will use as the basis for our model of plethysm.

Specific plethysms involving the chromatic symmetric function have naturally arisen before. In his paper originally introducing $X_G$, Stanley showed the superification relation

\newtheorem*{thm:stanley}{Theorem}

\begin{thm:stanley}[\cite{stanley}, Theorem 4.3]
$$
\omega_y(X_G)[\bx+\by] = \sum_{(\gamma, \kappa)} \prod_i x_i^{\kappa^{-1}(i)}y_i^{\kappa^{-1}(-i)}
$$
\end{thm:stanley}
where the sum ranges over ordered pairs of an acyclic orientation $\gamma$ of $G$, and all colorings $\kappa: V(G) \rightarrow \mathbb{Z}^+ \cup \mathbb{Z}^-$ such that each directed edge $u \rightarrow v$ of $\gamma$ satisfies either $\kappa(u) < \kappa(v)$, or $\kappa(u) = \kappa(v) \in \mathbb{Z}^-$. Recent work of Bernardi and Nadeau \cite{heaps} uses the theory of heaps to study the chromatic symmetric function, recovering this identity as well as others. For example, they show

\newtheorem*{thm:heaps}{Theorem}
\begin{thm:heaps}[\cite{heaps}, Theorem 5.6]

$$
\omega(X_G)[\bx+\by] = \sum_{(\gamma, \kappa_0)} p_{\lm_s(\gamma, \kappa_0)}(\bx)\prod_{i \geq 1} y_i^{\kappa^{-1}(i)}
$$

\end{thm:heaps}
 where the sum ranges over ordered pairs $(\gamma, \kappa_0)$ of an acyclic orientation $\gamma$ of $G$ and all colorings $\kappa_0: V(G) \rightarrow \mathbb{Z}_{\geq 0}$ such that each directed edge $u \rightarrow v$ of $\gamma$ satisfies $\kappa(u) \leq \kappa(v)$, and $\lm_s(\gamma, \kappa_0)$ is what they call the integer partition of \emph{source components} of $\gamma$ on $G$ restricted to $\kappa^{-1}(0)$ (more details are given in Section 4).

Our main result is a combinatorial description of any plethysm of the form $X_{(G,w)}[f]$ which admits the above results as special cases. Given $f$, rather than coloring $G$ with integers and using these to index variables arising from $f$, we simplify by coloring $V(G)$ with the variables themselves. To do so, we precisely define $Var(f)$, the set of variables described by $f$ (e.g. $Var(\bx+\by) = \{x_1,y_1,x_2,y_2,\dots\}$) and furthermore we give each variable $x$ an intrinsic sign $sgn(x)$ that is positive or negative (in particular, we treat $-x$ differently from negatively signed $x$ in the same way that $f[-\bx]$ is different from $f[\eps x]$).

\newtheorem*{thm:xpleth}{Theorem \ref{thm:xpleth}}
\begin{thm:xpleth}\label{thm:xpleth}

For $f$ an expression for which $Var(f)$ is defined, prescribe a total ordering $<$ on the elements of $Var(f)$. Then
$$
X_{(G,w)}[f] = \sum_{(\gamma, \kappa)} \prod_{v \in V(G)} \sgn({\kappa(v)}){\kappa(v)}^{w(v)}
$$
where the sum runs over all ordered pairs consisting of an acyclic orientation $\gamma$ of $G$, and a map $\kappa: V(G) \rightarrow Var(f)$ such that 
\begin{itemize}
    \item If $u \rightarrow_{\gamma} v$, then $\kappa(u) \leq \kappa(v)$ $($with respect to the total ordering on $Var(f)$$)$.
    \item Whenever $\kappa(u) = \kappa(v)$ and $sgn(\kappa(u)) = 1$,  $uv \notin E(G)$.
\end{itemize}
\end{thm:xpleth}
Since for any family $\{G_1, G_2, \dots\}$ of connected vertex-weighted graphs where $G_n$ has total weight $n$, the chromatic symmetric functions $X_{G_1}, X_{G_2}, \dots$ are algebraically independent and generate all symmetric functions \cite{bases}, our expression extends to an interpretation of any plethysm of symmetric functions. For example, $p_{\lm} = X_{G_{\lm}}$, where $G_{\lm}$ is the vertex-weighted graph with no edges and $l(\lm)$ vertices of weights $\lm_1 \geq \dots \geq \lm_{l(\lm)}$. Similarly, each element of the $e$ and $m$ bases may be recognized as (a constant multiple of) the chromatic symmetric function of a single vertex-weighted graph. Thus, for $b \in \{e,m,p\}$ we can use Theorem \ref{thm:xpleth} to directly evaluate $b_{\lm}[f]$ for any $\lm$. Then we can interpret plethysms of other symmetric functions, such as the Schur functions $s_{\lm}$, by writing $s_{\lm} = \sum_{\mu} c_{\mu}b_{\mu}$ using a known expansion of Schur functions, and evaluating $s_{\lm}[f] = (\sum c_{\mu}b_{\mu})[f] = \sum c_{\mu}(b_{\mu}[f])$ and interpreting each of these summands graph-theoretically.

We also consider plethysms involving the \emph{Tutte symmetric function}, also introduced by Stanley \cite{stanley2} and extended to vertex-weighted graphs by Aliste-Prieto, Zamora, and the authors \cite{tutteCrew} as a symmetric function with coefficient ring $\mathbb{C}[t]$ defined by
$$
XB_{(G,w)} = \sum_{\kappa} (1+t)^{e(\kappa)}\prod_{v \in V(G)} x_{\kappa(v)}^{w(v)}
$$
where the sum now ranges over \emph{all} colourings $\kappa$, and $e(\kappa)$ is the number of edges of $G$ whose endpoints receive the same colour from $\kappa$. It is equivalent to the $W$-polynomial of Noble and Welsh \cite{noble2, noble}, and serves as a natural link between the chromatic symmetric function and the $V$-polynomial of Moffatt and Monaghan \cite{vpol}.

This paper is structured as follows: in Section 2, we give background on symmetric functions, graph theory, and the chromatic symmetric function that will be necessary in our work. In Section 3, we prove Theorem \ref{thm:xpleth}, our main result. In Section 4, we give some immediate consequences of Theorem \ref{thm:xpleth}, including identities of plethysm and $X_G$ that follow. In Section 5, we extend our plethystic interpretation to the Tutte symmetric function. Finally, in Section 6 we note further directions for research.

\section{Background}

\subsection{Fundamentals of Partitions and Symmetric Functions}

 A \emph{set partition} of a set $S$ is a collection $B$ of nonempty, pairwise non-intersecting \emph{blocks} $B_1, \dots, B_k$ satisfying $B_1 \cup \dots \cup B_k = S$.
 
 An \emph{integer partition} is a tuple $\lm = (\lm_1,\dots,\lm_k)$ of positive integers such that $\lm_1 \geq \dots \geq \lm_k$. The integers $\lm_i$ are the \emph{parts} of $\lm$. If $\sum_{i=1}^k \lm_i = n$, we say that $\lm$ is a partition of $n$. The number of parts equal to $i$ in $\lm$ is given by $r_i(\lm)$. We may use simply \emph{partition} to refer to either a set or integer partition; the usage will be clear from context.
 
 If $S$ is a set with $n$ elements (so $n$ is a positive integer), we write $\pi \vdash S$ or $\lm \vdash n$ to mean respectively that $\pi$ is a set partition of $S$, and $\lm$ is a partition of $n$, and we write $|\lm| = |\pi| = n$. The number of blocks or parts is the \emph{length} of a partition, and is denoted by $l(\pi)$ or $l(\lm)$. When $\pi$ is a set partition, we will write $\lm(\pi)$ to mean the integer partition whose parts are the sizes of the blocks of $\pi$.

  A function $f(x_1,x_2,\dots) \in \mathbb{C}[[x_1,x_2,\dots]]$ is \emph{symmetric}\footnote{The choice of coefficient ring is irrelevant for the work in this paper so long as it is a field of characteristic $0$.} if $f(x_1,x_2,\dots) = f(x_{\sigma(1)},x_{\sigma(2)},\dots)$ for every permutation $\sigma$ of the positive integers $\mathbb{N}$.  The \emph{algebra of symmetric functions} $\Lambda$ is the subalgebra of $\mathbb{C}[[x_1,x_2,\dots]]$ consisting of those symmetric functions $f$ that are of bounded degree (that is, there exists a positive integer $n$ such that every monomial of $f$ has degree $\leq n$).  Furthermore, $\Lambda$ is a graded algebra, with natural grading
  $$
  \Lambda = \bigoplus_{d=0}^{\infty} \Lambda^d
  $$
  where $\Lambda^d$ consists of symmetric functions that are homogeneous of degree $d$ \cite[Chapter 7.1]{stanleybook}.

  Each $\Lambda^d$ is a finite-dimensional vector space over $\mathbb{C}$, with dimension equal to the number of partitions of $d$ (and thus, $\Lambda$ is an infinite-dimensional vector space over $\mathbb{C}$).  Some commonly-used bases of $\Lambda$ that are indexed by partitions $\lm = (\lm_1,\dots,\lm_k)$ are discussed in various textbooks (e.g. \cite[Chapter 7]{stanleybook}) and include:
\begin{itemize}
  \item The monomial symmetric functions $m_{\lm}$, defined as the sum of all distinct monomials of the form $x_{i_1}^{\lm_1} \dots x_{i_k}^{\lm_k}$ with distinct indices $i_1, \dots, i_k$.

  \item The power-sum symmetric functions, defined by the equations
  $$
  p_n = \sum_{k=1}^{\infty} x_k^n, \hspace{0.3cm} p_{\lm} = p_{\lm_1}p_{\lm_2} \dots p_{\lm_k}.
  $$
  \item The elementary symmetric functions, defined by the equations
  $$
  e_n = \sum_{i_1 < \dots < i_n} x_{i_1} \dots x_{i_n}, \hspace{0.3cm} e_{\lm} = e_{\lm_1}e_{\lm_2} \dots e_{\lm_k}.
  $$
  \item The complete homogeneous symmetric functions, defined by the equations
  $$
  h_n = \sum_{i_1 \leq \dots \leq i_n} x_{i_1} \dots x_{i_n}, \hspace{0.3cm} h_{\lm} = h_{\lm_1}h_{\lm_2} \dots h_{\lm_k}.
  $$
\end{itemize}

  We also make use of the \emph{augmented monomial symmetric functions}, defined by 
  $$
  \tm_{\lm} = \left(\prod_{i=1}^{\infty} r_i(\lm)!\right)m_{\lm}.
  $$
  
  There exists a classical involutive map $\omega: \Lambda \rightarrow \Lambda$ defined by $\omega(p_{\lm}) = (-1)^{|\lm|-l(\lm)}p_{\lm}$.
  
  \subsection{Plethysm of Symmetric Functions}
  
  All of the material in this section may be found, described in further detail, in many textbooks, e.g. \cite[Chapter 7, Appendix 2]{stanleybook}, as well as shorter expositional papers, such as \cite{expose}.
  
  The operation of \emph{plethysm} of two symmetric functions is typically denoted by $f[g]$. It is defined for all $f,g \in \Lambda$ by defining it for constants and for $p_n$, and defining how it expands across sums and products. In what follows, $c$ is an arbitrary constant, $f$ and $g$ are arbitrary symmetric functions, and the variables are implicitly the arguments of the innermost function and are omitted from the notation:
  
  \begin{itemize}
      \item $c[f] = c$.
      \item $p_n[c] = c$.
      \item $p_n[p_m] = p_{nm}$.
      \item $p_n[f+g] = p_n[f]+p_n[g]$.
      \item $p_n[fg] = p_n[f]p_n[g]$.
      \item $(fg)[h] = f[h]g[h]$.
      \item $(f+g)[h] = f[h]+g[h]$.
  \end{itemize}
  
  Note that these definitions imply that $$p_{\lm}[fg] = p_{\lm}[f]p_{\lm}[g]$$ but also $$p_{\lm}[f+g] = \prod_{i=1}^{l(\lm)} \left(p_{\lm_i}[f]+p_{\lm_i}[g]\right) \neq p_{\lm}[f]+p_{\lm}[g].$$
  
  Sometimes it is useful to modify the variables within a plethystic expression. In the literature this is most often done by writing $\bx = x_1 + x_2 + \dots$, so that $f[\bx]$ = $f[p_1(x_1, x_2, \dots)] = f(x_1,x_2,\dots)$. Using a different indeterminate $q$ we can write expressions such as
  $$
  f[q\bx] = f(qx_1, qx_2, \dots)
  $$
  so that if $f \in \Lambda^n$ we have
  $$
  f[q\bx] = q^nf[\bx].
  $$
  
  Now for a difficulty: the above rules require $p_n[-g] = -p_n[g]$. However, this would seem to disagree with the above notion of $f[q\bx]$, which would imply $p_n[-\bx] = (-1)^np_n[\bx]$. The only possible resolution is that in general \emph{plethysm does not commute with evaluation of variables}.
  
  This is a bit unintuitive, but workable. However, we are then left with a different question: how do we negate each variable separately if we cannot do so using $f[-\bx]$? The answer is that we define a special operator $\eps$ that does this for us:
  $$
  p_n[\eps \bx] = (-1)^np_n[\bx].
  $$
  
  Furthermore, it is well-known that we may use $\eps$ to give an interpretation of the plethysm of an involuted function: for $f \in \Lambda^n$ we have
 \begin{equation}\label{eq:omega}
  (\omega f)[\bx] = (-1)^nf[-\eps\bx].
  \end{equation}
  
  \subsection{Graphs}
  
  This section introduces some standard definitions in graph theory. This material can also be found with more background and illumination in standard graph theory textbooks such as \cite{diestel}.

  A \emph{graph} $G = (V,E)$ consists of a \emph{vertex set} $V$ and an \emph{edge multiset} $E$ where the elements of $E$ are (unordered) pairs of (not necessarily distinct) elements of $V$. An edge $e \in E$ that contains the same vertex twice is called a \emph{loop}.  If there are two or more edges that each contain the same two vertices, they are called \emph{multi-edges}.  A \emph{simple graph} is a graph $G = (V,E)$ in which $E$ does not contain loops or multi-edges (thus, $E \subseteq \binom{V}{2}$, the set of \emph{all} unordered pairs of vertices).  If $\{v_1,v_2\}$ is an edge, we will write it as $v_1v_2 = v_2v_1$.  The vertices $v_1$ and $v_2$ are the \emph{endpoints} of the edge $v_1v_2$. We will use $V(G)$ and $E(G)$ to denote the vertex set and edge multiset of a graph $G$, respectively.

  A \emph{subgraph} of a graph $G$ is a graph $G' = (V',E')$ where $V' \subseteq V$ and $E' \subseteq E|_{V'}$, where $E|_{V'}$ is the set of edges with both endpoints in $V'$.  An \emph{induced subgraph} of $G$ is a graph $G' = (V',E|_{V'})$ with $V' \subseteq V$.  The induced subgraph of $G$ using vertex set $V'$ will be denoted $G|_{V'}$.  A \emph{stable set} of $G$ is a subset $V' \subseteq V$ such that $E|_{V'} = \emptyset$.  A \emph{clique} of $G$ is a subset $V' \subseteq V$ such that for every pair of distinct vertices $v_1$ and $v_2$ of $V'$, $v_1v_2 \in E(G)$.

  A \emph{path} in a graph $G$ is a nonempty sequence of edges $v_1v_2$, $v_2v_3$, \dots, $v_{k-1}v_k$ such that $v_i \neq v_j$ for all $i \neq j$.  The vertices $v_1$ and $v_k$ are the \emph{endpoints} of the path. A \emph{cycle} in a graph is a nonempty sequence of distinct edges $v_1v_2$, $v_2v_3$, \dots, $v_kv_1$ such that $v_i \neq v_j$ for all $i \neq j$. Note that in a simple graph every cycle must have at least $3$ edges, although in a nonsimple graph there may be cycles of size $1$ (a loop) or $2$ (multi-edges).

  A graph $G$ is \emph{connected} if for every pair of vertices $v_1$ and $v_2$ of $G$ there is a path in $G$ with $v_1$ and $v_2$ as its endpoints.  The \emph{connected components} of $G$ are the maximal induced subgraphs of $G$ which are connected.  The number of connected components of $G$ will be denoted by $c(G)$.
  
  The \emph{complete graph} $K_n$ on $n$ vertices is the unique simple graph having all possible edges, that is, $E(K_n) = \binom{V}{2}$ where $V = V(K_n)$.
  
  An \emph{orientation} $\gamma$ of a graph $G$ is an assignment of a direction to each edge $uv \in E(G)$ by specifying either $u \rightarrow_{\gamma} v$ or $v \rightarrow_{\gamma} u$ (note that each edge of a multiedge is oriented separately). As a special case, loops are also assumed to have two possible orientations, clockwise and counterclockwise. An orientation $\gamma$ of $G$ is \emph{acyclic} if it does not have a directed cycle, meaning a cycle $v_1v_2$, $v_2v_3$, \dots, $v_kv_1$ such that $v_1 \rightarrow_{\gamma} v_2, v_2 \rightarrow_{\gamma} v_3, \dots, v_k \rightarrow_{\gamma} v_1$. Note that if $G$ has a loop it admits no acyclic orientations, and if $G$ has multiedges, any acyclic orientation of $G$ directs every edge of a given multiedge in the same direction.

  Given a graph $G$, there are two commonly used operations that produce new graphs. One is \emph{deletion}: given an edge $e \in E(G)$, the graph of $G$ \emph{with} $e$ \emph{deleted} is the graph $G' = (V(G), E(G) \bk \{e\})$, and is denoted $G \bk e$ or $G-e$. Likewise, if $S$ is a multiset of edges, we use $G \bk S$ or $G-S$ to denote the graph $(V(G),E(G) \bk S)$.

  The other operation is the \emph{contraction of an edge} $e = v_1v_2$, denoted $G / e$.  If $v_1 = v_2$ ($e$ is a loop), we define $G / e = G \bk e$.  Otherwise, we create a new vertex $v^*$, and define $G / e$ as the graph $G'$ with $V(G') = (V(G) \bk \{v_1,v_2\}) \cup v^*$, and $E(G') = (E(G) \bk E(v_1, v_2)) \cup E(v^*)$, where $E(v_1,v_2)$ is the set of edges with at least one of $v_1$ or $v_2$ as an endpoint, and $E(v^*)$ consists of each edge in $E(v_1,v_2) \bk e$ with the endpoint $v_1$ and/or $v_2$ replaced with the new vertex $v^*$.  Note that this is an operation on a (possibly nonsimple) graph that identifies two vertices while keeping and/or creating multi-edges and loops.
  
  \subsection{Vertex-Weighted Graphs and Colourings}
  
  A \emph{vertex-weighted graph} $(G,w)$ consists of a graph $G$ and a weight function $w: V(G) \rightarrow \mathbb{Z}^+$. For any $S \subseteq V(G)$ we will denote $w(S) = \sum_{v \in S} w(v)$. 
  
  Given a vertex-weighted graph $(G,w)$ and a non-loop edge $e = v_1v_2 \in E(G)$ we define in $(G,w)$ the \emph{contraction by e} to be the graph $(G/e,w/e)$, where $w/e$ is the weight function such that $(w/e)(v) = w(v)$ if $v$ is the not the contracted vertex $v^*$, and $(w/e)(v^*) = w(v_1) + w(v_2)$ (if $e$ is a loop, we define the contraction of $(G,w)$ by $e$ to be $(G \bk e, w)$).

  Let $(G,w)$ be a vertex-weighed graph (with $G$ not necessarily simple). A map $\kappa: V(G) \rightarrow \mathbb{Z}^+$ is called a \emph{colouring} of $G$. A colouring is called \emph{proper} if $\kappa(v_1) \neq \kappa(v_2)$ for all $v_1,v_2 \in V(G)$ such that there exists an edge $e = v_1v_2$ in $E(G)$. The \emph{chromatic symmetric function} was defined for unweighted graphs by Stanley \cite{stanley}; the authors extended it to vertex-weighted graphs as \cite{delcon}
  
  $$
X_{(G,w)} = \sum_{\kappa \text{ proper}} \prod_{v \in V(G)} x_{\kappa(v)}^{w(v)} 
$$
  where the sum ranges over all proper colourings $\kappa$ of $G$. This properly generalizes Stanley's definition of $X_G$ on graphs $G$ without weight functions, as when $w$ is the function assigning weight $1$ to each vertex it is easy to verify that $X_{(G,w)} = X_G$. As noted in \cite{delcon}, other expansions include
  $$
  X_{(G,w)} = \sum_{S \subseteq E(G)} (-1)^{|S|}p_{\lm(S)}
  $$
  where $\lm(S)$ is the integer partition whose parts are the total weights of the connected components of $((V(G),S), w)$, and 
  $$
  X_{(G,w)} = \sum_{\pi \text{ stable}} \tm_{\lm(\pi)}
  $$
  where the sum ranges over all (set) partitions of $V(G)$ into stable sets, with $\lm(\pi)$ the integer partition whose parts are the total weights of the parts of $\pi$. Note that if $G$ contains a loop then $X_{(G,w)} = 0$ for any $w$, and $X_{(G,w)}$ is unchanged by replacing each multi-edge of $G$ by a single edge. 
  
  The chromatic symmetric function admits the edge deletion-contraction relation\footnote{This deletion-contraction relation was used in equivalent form for the Hopf algebra of vertex-weighted graphs by Chmutov, Duzhin, and Lando \cite{chmutov}, and for the $W$-polynomial by Noble and Welsh \cite{noble}.} \cite{delcon}
\begin{equation}\label{eq:delcon}
X_{(G,w)} = X_{(G \bk e, w)} - X_{(G/e,w/e)}.
\end{equation}
  
The \emph{Tutte (or bad-colouring) symmetric function} $XB_{(G,w)}$ is defined as a symmetric function with coefficient ring $\mathbb{C}[t]$ as \cite{tutteCrew, stanley} 
  
$$
XB_{(G,w)} = \sum_{\kappa} (1+t)^{e(\kappa)}\prod_{v \in V(G)} x_{\kappa(v)}^{w(v)}
$$
where the sum ranges over \emph{all} colourings of $G$ (not just the proper ones), with $e(\kappa)$ equal to the number of edges of $G$ whose endpoints receive the same colour from $\kappa$. From \cite{tutteCrew} we also have
$$
XB_{(G,w)} = \sum_{S \subseteq E(G)} t^{|S|}p_{\lm(S)}
$$
and
$$
XB_{(G,w)} = \sum_{\pi} (1+t)^{e(\pi)}\tm_{\lm(\pi)}
$$
where this sum ranges over \emph{all} (set) partitions of $V(G)$ (not just the stable ones), letting $e(\pi)$ be the number of edges of $G$ whose endpoints lie in the same block of $\pi$.
  
Note that unlike $X_{(G,w)}$, the Tutte symmetric function is affected by multi-edges and is not annihilated by loops. Furthermore, it is easy to verify that setting $t = -1$ in the Tutte symmetric function recovers the chromatic symmetric function.

It may be shown that this function satisfies the deletion-contraction relation \cite{tutteCrew}
\begin{equation}\label{eq:tuttedelcon}
XB_{(G,w)} = XB_{(G \bk e, w)} + tXB_{(G/e,w/e)}.
\end{equation}

\section{A Plethystic Identity for the Chromatic Symmetric Function}

We will work in a ring of power series with coefficients in $\mathbb{C}$ and variables as defined below. Let a lowercase letter represent a single variable, typically $q$ or $t$. Let $\bx = x_1 + x_2 + \dots = p_1(x_1,x_2,\dots)$ represent a sum of countably many variables. We define $\by$ and any bolded letter analogously. We also define $\bx_n = x_1 + \dots + x_n$ and analogously for $\by_n$ or any other bolded letter with integer subscript. 

In the original definition of the chromatic symmetric function, a graph is colored by a map $V(G) \rightarrow \mathbb{Z}^+$, and a proper coloring gets a monomial with the corresponding variables $\{x_i : i \in \mathbb{Z}^+\}$. However, this could also be viewed by letting the monomial come straight from the coloring as a map $V(G) \rightarrow \{x_1, x_2, \dots\}$, that is, as a mapping directly from the vertex set of $G$ to an appropriate set of variables. In particular, then $X_G[\bx] = X_G(x_1, x_2, \dots)$ may be interpreted as coloring $G$ with the variables defined by the plethysm input. Theorem \ref{thm:xpleth} comes from extending this notion to any plethysm $X_G[f]$ by determining an appropriate variable set from $f$ to color with.

Before proceeding with how this variable set is defined, we highlight a few additional points. Each variable $v$ has a formal \emph{sign} $\sgn(v)$ that is $1$ or $-1$; all ``base" variables (the ones defined so far) are assumed to have positive sign. We will use an involutive, unary operator $\eps$ to change the sign of one or more variables; as noted in the introduction, the purpose of $\sgn$ is to maintain the distinction between $f[-\bx]$ and $f[\eps \bx]$. 

Sometimes we will wish to create a ``duplicate" $q'$ of a variable (or coefficient) $q$, notated by the $'$ symbol. These variables are formally different from each other, but are notated in this manner because  all variables with one or more $'$ symbols will later be substituted with the original. For example, $X_G[n] = X_G(\underbrace{1,\dots,1}_{\text{ n ones}},0,0,\dots)$ is defined by coloring the vertices of $G$ with $n$ \emph{distinguishable} copies of $1$ (terms with any $0$ will go to $0$), so we would denote these by $1,1^{'},1^{''},\dots$. \emph{Throughout this work, whenever an evaluation outside of a plethysm would use variables marked with $'$, it is assumed that they are substituted with the corresponding unmarked variable.}

When evaluating a plethystic substitution $f[g]$, the expression $g$ may contain constants, variables, addition, subtraction, multiplication\footnote{Division by expressions involving variables may be interpreted using the geometric series $\frac{1}{1-t} = 1+t+t^2+\dots$.}, and application of $\eps$. We interpret the \emph{set of variables} of $g$ in the following way:

\begin{definition}
An expression $e$ occurring inside plethystic brackets may be interpreted as a set of constants and/or variables; for clarity the corresponding set will be denoted $Var(e)$ and will be defined using the following inductive rules:

\begin{itemize}
    \item $Var(1) = \{1\}$.
    \item $Var(q) = \{q\}$.
    \item $Var(\bx_n) = \{x_1, \dots, x_n\}$.
    \item $Var(\bx) = \{x_1, x_2, \dots \}$.
    \item $Var(f+g) = Var(f) \cup Var(g) \cup (Var(f) \cap Var(g))'$ (so there are two ``copies" of each variable in $Var(f) \cap Var(g)$, with one distinguished by $'$).
     \item $Var(-f) = \{\ob{z}: z \in Var(f)\}$ where $\ob{z}$ is the same variable as $z$, but has $\sgn(\ob{z}) = -\sgn(z)$.
     \item $Var(\eps(f)) = \{-z: z \in Var(f)\}$.
     \item $Var(fg) = \{zw: z \in Var(f), w \in Var(g)\}$ (letting $sgn(zw) = sgn(z)sgn(w)$).
    \end{itemize}
\end{definition}

The modified addition rule is used because $Var(f)$ formally is a set, not a multiset. Thus we cannot at this level otherwise define expressions that require multiple copies of the same variable, like $Var(f+f)$. This is addressed by replacing the variables marked with $'$ with their originals at the time of evaluation as noted above. For example, $Var(\bx+\bx_3) = \{x_1,x_2,x_3,x_1^{'},x_2^{'},x_3^{'},x_4,x_5,\dots\}$. More generally, when $f$ is a symmetric function we can compute its variable set easily by the addition rule and multiplication rules. For example, $Var(e_2(x_1,x_2,\dots)) = Var(\sum_{i < j} x_ix_j) = \cup_{i < j} Var(x_ix_j) = \{x_1x_2, x_1x_3, \dots\}$

Note carefully the difference between $Var(-f)$ and $Var(\eps(f))$: the former changes the inherent sign of the variable, whereas the latter changes the variable itself. For example, $Var(-\bx_2) = \{\overline{x_1},\overline{x_2}\}$ where the overline indicates that only the sign of the variable has changed. However, $Var(\eps(\bx_2)) = \{-x_1,-x_2\}$. This difference will be more carefully illuminated shortly.

We recall here the identity \cite[Lemma 3]{delcon}
\begin{equation}\label{eq:pbasis}
X_{(G,w)} = \sum_{S \subseteq E(G)} (-1)^{|S|}p_{\lm(S)}
\end{equation}
as it will be important in what follows.

\begin{theorem}\label{thm:xpleth}

For $f$ an expression for which $Var(f)$ is defined, prescribe a total ordering $<$ on the elements of $Var(f)$. Then
$$
X_{(G,w)}[f] = \sum_{(\gamma, \kappa)} \prod_{v \in V(G)} \sgn({\kappa(v)}){\kappa(v)}^{w(v)}
$$
where the sum runs over all ordered pairs consisting of an acyclic orientation $\gamma$ of $G$, and a map $\kappa: V(G) \rightarrow Var(f)$ such that 
\begin{itemize}
    \item If $u \rightarrow_{\gamma} v$, then $\kappa(u) \leq \kappa(v)$ $($with respect to the total ordering on $Var(f))$.
    \item Whenever $\kappa(u) = \kappa(v)$ and $sgn(\kappa(u)) = 1$,  $uv \notin E(G)$.
\end{itemize}
\end{theorem}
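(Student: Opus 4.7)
The plan is to start from the $p$-basis expansion \eqref{eq:pbasis}, apply plethysm termwise using multiplicativity on power sums, and then reorganize so that the inner sum can be evaluated by classical identities for the chromatic polynomial. From \eqref{eq:pbasis} and $p_n[fg] = p_n[f] p_n[g]$,
$$X_{(G,w)}[f] = \sum_{S \subseteq E(G)} (-1)^{|S|} \prod_{C} p_{w(C)}[f],$$
where $C$ ranges over the connected components of $(V(G), S)$ and $w(C) = \sum_{v \in C} w(v)$. A careful check from the inductive definition of $Var(\cdot)$ and the axioms $p_n[c] = c$ and $p_n[f+g] = p_n[f]+p_n[g]$ (which force $p_n[-g] = -p_n[g]$) yields $p_n[f] = \sum_{z \in Var(f)} \sgn(z) z^n$. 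Expanding each factor, interchanging sums, and writing $M_z := \kappa^{-1}(z)$,
$$X_{(G,w)}[f] = \sum_{\kappa : V(G) \to Var(f)} \Big(\prod_{v \in V(G)} \kappa(v)^{w(v)}\Big) \prod_{z \in Var(f)} T_z(\kappa), \qquad T_z(\kappa) := \sum_{S_z \subseteq E(G[M_z])} (-1)^{|S_z|} \sgn(z)^{c(M_z, S_z)},$$
where $c(M_z, S_z)$ denotes the number of connected components of $(M_z, S_z)$.

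Next I would evaluate $T_z$ by sign. If $\sgn(z) = 1$, then $T_z = \sum_{S_z}(-1)^{|S_z|}$, which vanishes unless $E(G[M_z]) = \emptyset$, i.e., $M_z$ is a stable set, recovering the second bullet of the theorem. If $\sgn(z) = -1$, then $T_z = \chi_{G[M_z]}(-1)$ by Whitney's expansion of the chromatic polynomial, and Stanley's identity $\chi_H(-1) = (-1)^{|V(H)|} a(H)$ (where $a(H)$ is the number of acyclic orientations of $H$) gives $T_z = (-1)^{|M_z|} a(G[M_z])$. For every surviving $\kappa$ the two cases merge into $T_z = \sgn(z)^{|M_z|} a(G[M_z])$, and the signs collected over all $z$ satisfy $\prod_z \sgn(z)^{|M_z|} = \prod_v \sgn(\kappa(v))$, matching the sign prefactor in the target formula.

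To finish, I would verify that $\prod_z a(G[M_z])$ equals the number of acyclic orientations $\gamma$ of $G$ satisfying $\kappa(u) \leq \kappa(v)$ whenever $u \to_\gamma v$. The total order on $Var(f)$ forces the direction of every edge joining vertices with distinct colors, while edges inside each $G[M_z]$ may be oriented independently; moreover, any directed cycle in $\gamma$ produces a chain $\kappa(v_1) \leq \cdots \leq \kappa(v_1)$, forcing all colors equal, so the cycle is confined to a single $M_z$. Hence acyclic orientations of $G$ compatible with $\kappa$ correspond bijectively to independent choices of acyclic orientations of each $G[M_z]$, completing the proof. The main obstacle I anticipate is the sign bookkeeping underlying $p_n[z] = \sgn(z) z^n$ uniformly across $Var(f)$—in particular, keeping $Var(-f)$ (which flips $\sgn$) and $Var(\eps f)$ (which negates the symbol itself) cleanly separated—after which the identity is a direct reorganization using Whitney's and Stanley's classical evaluations.
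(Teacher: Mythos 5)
Your proof is correct and takes a genuinely different route from the paper's. Where the paper proceeds by double induction --- an inner structural induction on the plethysm argument $f$ to establish the edgeless base case $p_\lambda[f]$, followed by an outer induction on $|E(G)|$ via the deletion-contraction relation \eqref{eq:delcon} with a case analysis on orientations --- you instead start from the power-sum expansion \eqref{eq:pbasis}, push the plethysm through termwise, and reorganize the sum over edge subsets by colorings. After extracting the key lemma $p_n[f] = \sum_{z \in Var(f)} \sgn(z)\,z^n$ (which is the same structural induction on $f$ the paper performs for its base case, so this piece is shared), you evaluate the inner sums $T_z(\kappa)$ via Whitney's rank expansion of the chromatic polynomial and Stanley's theorem $\chi_H(-1) = (-1)^{|V(H)|}a(H)$, then show that compatible acyclic orientations of $G$ factor over the color classes $M_z$. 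The tradeoff: your approach is shorter and conceptually transparent once the classical identities are available, since it avoids the paper's orientation case analysis; the paper's deletion-contraction argument is more self-contained (it effectively re-derives the Stanley-acyclic-orientation count within the induction) and also mirrors the structure reused for the Tutte symmetric function version (Theorem \ref{thm:xbpleth}), whereas adapting your Whitney-based argument to $XB_{(G,w)}$ would require the rank-nullity form of the Whitney expansion with a $t$-parameter. One small point worth making explicit in your write-up: the two cases for $T_z$ only ``merge'' into $\sgn(z)^{|M_z|}a(G[M_z])$ after you have already discarded the $\kappa$ killed by the $\sgn(z)=1$, $E(G[M_z])\neq\emptyset$ case; for those $\kappa$, $T_z = 0$ while $a(G[M_z]) \geq 2$, so the unified formula does not hold literally for all $\kappa$, only for the surviving ones --- which, as you note, are exactly the $\kappa$ appearing in the theorem statement.
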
 

Here we can see clearly the difference between changing $\kappa(v) = z$ to one of $\kappa(v) = \ob{z}$ or $\kappa(v) = -z$: the former changes the sign of the corresponding term in the sum by $-1$, while the latter changes the sign by $(-1)^{w(v)}$.

Before we prove the formula, some examples might help to clarify how to evaluate it. $X_{(G,w)}[1]$ will be the sum over all $(\gamma, \kappa)$, where the map $\kappa$ assigns the variable $1$ to each vertex of $G$. Since two vertices with the same positive colour cannot be adjacent, it follows that if $G$ has any edges $X_{(G,w)}[1] = 0$. Otherwise, there is only the trivial colouring, and the product of all the $1$s is $1$, so if $G$ has no edges $X_{(G,w)}[1] = 1$.

If we evaluate $X_{(G,w)}[\bx]$, then the colouring is required to be simply a usual proper colouring of $G$ with the variables $x_1, \dots$, and the acyclic orientation $\gamma$ corresponding to $\kappa$ is uniquely defined, so we have that our definition for $X_{(G,w)}[\bx]$ coincides with $X_{(G,w)}(x_1,\dots)$.

To evaluate $X_{(G,w)}[-1]$, there is only one colouring, which gives every vertex of $G$ the variable $1$ with a negative sign. Since edges between two vertices with equal negative colour can be in either direction, we get a product of $1$ for each choice of acyclic orientation of $G$, and an overall sign of $(-1)^{|V(G)|}$, so we recover a classic formula of Stanley's for $\chi_G(-1)$ \cite{acyclic}.

To compute $X_{(G,w)}[n] = X_{(G,w)}[1+\dots+1]$, we colour the vertices of $G$ with $n$ distinguished copies of $1$. Since all ``variables" are positive, the colouring must be proper, and the corresponding acyclic orientation is uniquely defined, so we simply get a product of $1$ for each proper $n$-colouring of $G$, so as expected $X_{(G,w)}[n] = \chi_G(n)$.

\begin{proof}

First, note that for all $(G,w)$, as shown above $X_{(G,w)}[1]$ is clearly equal to $1$ if the graph has no edges and $0$ otherwise, $X_{(G,w)}[\bx] = X_{(G,w)}(x_1,\dots)$, and both of these agree with both sides of the above equation for all graphs.
We proceed by induction twice.

The overall induction will be on the number of edges of a graph, using deletion-contraction. To establish the base case of this overall induction, we will induct on the complexity of the expression inside plethystic brackets.

First, the overall base case is graphs with no edges. For a partition $\lm$ with parts $\lm_1 \geq \dots \geq \lm_k$, let $(G_{\lm},w_{\lm})$ be the graph with no edges and $l(\lm)$ vertices with weights prescribed by the parts of $\lm$, and note that $X_{(G_{\lm}, w_{\lm})} = p_{\lm}$. Give an ordering $v_1, \dots, v_k$ to the vertices of $(G_{\lm},w_{\lm})$ such that $w(v_i) = \lm_i$. 

The desired result is clear for the base expressions $1$ and $\bx$, so we must show by induction that the result holds for more complex expressions. Since we only have the empty acyclic orientation, we suppress $\gamma$ from the notation in this portion of the proof. Suppose that for expressions $f$ and $g$ we have
$$
X_{(G_{\lm},w_{\lm})}[f] = p_{\lm}[f] = \sum_{\kappa: V(G) \rightarrow Var(f)} \prod_{v \in V(G)} \sgn(\kappa(v))\kappa(v)^{w(v)}
$$
and likewise for $g$.

First, addition. We start by showing that under our definition have
$$p_{\lm}[f+g] = \prod_{i=1}^{l(\lm)}(p_{\lm_i}[f]+p_{\lm_i}[g]) = $$ $$ \prod_{i=1}^{l(\lm)} \left(\sum_{\kappa: \{v_i\} \rightarrow Var(f)} \sgn(\kappa(v))\kappa(v)^{w(v)} + \sum_{\kappa: \{v_i\} \rightarrow Var(g)} \sgn(\kappa(v))\kappa(v)^{w(v)} \right) = $$
$$
\prod_{i=1}^{l(\lm)} \left(\sum_{\kappa: \{v_i\} \rightarrow Var(f) \sqcup Var(g) \sqcup (Var(f) \cap Var(g))'} \sgn(\kappa(v))\kappa(v)^{w(v)} \right) = $$ $$\sum_{\kappa: V(G) \rightarrow Var(f) \sqcup Var(g) \sqcup (Var(f) \cap Var(g))'} \prod_{v \in V(G)} \sgn(\kappa(v))\kappa(v)^{w(v)}.
$$

Next, negation. By rules of plethysm we have
$$
p_{\lm}[-f] = (-1)^{|\lm|}(\omega p_{\lm})[f] = (-1)^{|\lm|}(-1)^{|\lm|-l(\lm)} p_{\lm}[f] = 
$$
$$
(-1)^{l(\lm)}\sum_{\kappa: V(G) \rightarrow Var(f)} \prod_{v \in V(G)} \sgn(\kappa(v))\kappa(v)^{w(v)} = \sum_{\kappa: V(G) \rightarrow Var(f)} \prod_{v \in V(G)} (-\sgn(\kappa(v)))\kappa(v)^{w(v)} = 
$$
$$
\sum_{\kappa: V(G) \rightarrow \ob{Var(f)}} \prod_{v \in V(G)} \sgn(\kappa(v))\kappa(v)^{w(v)}
$$
We continue to $\eps$. We have
$$
p_{\lm}[\eps f] = (-1)^{|\lm|}p_{\lm}[f] = \sum_{\kappa: V(G) \rightarrow Var(f)} \prod_{v \in V(G)} \sgn(\kappa(v))(-\kappa(v))^{w(v)} = $$ $$\sum_{\kappa: V(G) \rightarrow Var(\eps (f))} \prod_{v \in V(G)} \sgn(\kappa(v))\kappa(v)^{w(v)}
$$

Finally, for multiplication we have
$$
p_{\lm}[fg] = p_{\lm}[f]p_{\lm}[g] = $$ $$ \left(\sum_{\kappa: V(G) \rightarrow Var(f)} \prod_{v \in V(G)} \sgn(\kappa(v))\kappa(v)^{w(v)}\right)\left(\sum_{\kappa: V(G) \rightarrow Var(g)} \prod_{v \in V(G)} \sgn(\kappa(v))\kappa(v)^{w(v)}\right) = 
$$
$$
\sum_{\kappa: V(G) \rightarrow Var(f) \times Var(g)} \prod_{v \in V(G)} \sgn(\kappa(v))\kappa(v)^{w(v)}.
$$

We now proceed to the main inductive step. We assume that the claim has been shown for all vertex-weighted graphs of less than $k$ edges for some positive integer $k$, and we prove that the claim holds for vertex-weighted graphs with $k$ edges. Let $(G,w)$ be an arbitrary vertex-weighted graph with $k$ edges, let $f$ be arbitrary, and select any edge $e$ of $G$. We wish to establish the result for $X_{(G,w)}[f]$. For brevity, we will denote $\prod_{v \in V(G)} \sgn(\kappa(v))\kappa(v)^{w(v)}$ by $x_{\kappa}$.

Starting with the deletion-contraction relation
$$
X_{(G,w)}[f] = X_{(G-e,w)}[f]-X_{(G/e,w/e)}[f],
$$
using the inductive hypothesis on $G-e$ and $G/e$, and rearranging it suffices to show that
\begin{equation}\label{eq:sub}
\sum_{\substack{(\gamma, \kappa) \\ V(G-e)}} x_{\kappa} = \sum_{\substack{(\gamma, \kappa) \\ V(G)}} x_{\kappa}+\sum_{\substack{(\gamma, \kappa) \\ V(G/e)}} x_{\kappa}.
\end{equation}
We proceed in a manner essentially identical to the proof of \cite[Theorem 8]{delcon}. For a fixed choice of $\gamma$ and $\kappa$ in $G-e$, we let $a_{G-e}(\gamma,\kappa)$ denote the corresponding summand of the first sum in \eqref{eq:sub}, and we define
      \begin{itemize}\label{it:acyc}
      \item $\gamma_1$ is the orientation of $(G,w)$ with $v_1 \rightarrow v_2$ and all other edges oriented as in $\gamma$.
      \item $\gamma_2$ is the orientation of $(G,w)$ with $v_2 \rightarrow v_1$ and all other edges oriented as in $\gamma$.
      \item If $\kappa(v_1) = \kappa(v_2)$, $\kappa_e$ is the colouring of $(G / e, w / e)$ with $\kappa_e(v^*) = \kappa(v_1)$ where $v^*$ is the vertex created by the contraction of $e$, and for all other vertices $v$, $\kappa_e(v) = \kappa(v)$.
      \item If $\kappa(v_1) \neq \kappa(v_2)$, then $\kappa_e$ is not defined.
      \end{itemize}
       We now define $a_G(\gamma_1,\kappa)$, $a_G(\gamma_2, \kappa)$, and $a_{G/e}(\gamma / e, \kappa_e)$ analogously for all orientations $\gamma$ and all colourings $\kappa$ to be equal to the corresponding summands from sums in \eqref{eq:sub}, or $0$ if no such summand exists (if $\gamma$ is not acyclic in its graph, or if $\kappa$ is undefined or is not a valid colouring of $G$ with respect to $\gamma$). Using these definitions, to show \eqref{eq:sub} it suffices to show the stronger statement that for every acyclic orientation $\gamma$ of $G \bk e$, and every valid $\kappa$ of $(G \bk e, w)$ with respect to $\gamma$, we have
      \begin{equation}\label{eq:invkappa}
a_{G-e}(\gamma,\kappa) = a_G(\gamma_1, \kappa)+a_G(\gamma_2, \kappa)+a_{G/e}(\gamma / e, \kappa_e)
      \end{equation}
      since it is easy to verify that every summand of each sum of \eqref{eq:sub} is counted exactly once in this way.
      
      We split into cases based on whether $\gamma$ has a directed path between $v_1$ and $v_2$ (note that it does not contain both a path from $v_1$ to $v_2$ and one from $v_2$ to $v_1$ since then $\gamma$ would contain an oriented cycle). Suppose that $\gamma$ contains such a path; without loss of generality we may assume it is from $v_1$ to $v_2$. Then $\gamma_2$ and $\gamma_e$ both contain oriented cycles in their respective graphs, and so the corresponding terms of \eqref{eq:invkappa} are $0$. However, $\gamma_1$ does not contain an oriented cycle of $(G, w)$. Clearly $\kappa(v_1) \leq \kappa(v_2)$. We split into subcases:
      \begin{itemize}
          \item If $\kappa(v_1) < \kappa(v_2)$, or if $\kappa(v_1) = \kappa(v_2)$ is negative, then the remaining two terms are both valid, and $a_{G-e}(\gamma,\kappa) = a_G(\gamma_1, \kappa)$.
          \item If $\kappa(v_1) = \kappa(v_2)$ is positive, then every vertex on the direct path from $v_1$ to $v_2$ has the same positive colour, which is a contradiction since we start with $\gamma$ and $\kappa$ that are valid for $G-e$. Thus, this case cannot occur.
      \end{itemize}

      Now assume that there is no directed path. Then all of $\gamma_1$, $\gamma_2$, and $\gamma_e$ are acyclic orientations.  We split into subcases based on $\kappa$.  
      \begin{itemize}
          \item If $\kappa(v_1) \neq \kappa(v_2)$, then without loss of generality consider when $\kappa(v_1) < \kappa(v_2)$. Then the terms for $\gamma_1$ and $\gamma_e$ are zero, and the other two are equal.
          \item If $\kappa(v_1) = \kappa(v_2)$ is positive, then the terms for $\gamma_1$ and $\gamma_2$ are zero, and the other two are equal.
          \item If $\kappa(v_1) = \kappa(v_2)$ are negative, then all terms are nonzero.  However, $a_{G/e}(\gamma / e, \kappa_e)$ has the opposite sign of the others since it has one fewer negative vertex, and the conclusion holds.
      \end{itemize}
      
      This finishes the inductive step and concludes the proof.

\end{proof}

\section{Consequences of the Plethystic Identity}

One application of Theorem \ref{thm:xpleth} is to provide a direct combinatorial interpretation of an otherwise abstract, algebraic concept. 

Using this identity, and the fact that many classical symmetric function bases may be written as chromatic symmetric functions of vertex-weighted graphs, we can provide novel proofs for old and new plethystic identities of symmetric functions. We first establish the following result, whose unweighted version was proved in a related way by Bernardi and Nadeau using heaps\footnote{This is a vertex-weighted analogue of the coproduct in the Hopf algebra of graphs.}:

\begin{lemma}\label{lem:graphadd}[\cite{heaps}, proof of Theorem 5.6]

$$
X_{(G,w)}[f+g] = \sum_{A \sqcup B = V(G)} X_{(G|_A, w|_A)}[f]X_{(G|_B,w|_B)}[g]
$$
where $A$ and $B$ form a partition of $V(G)$, and $w|_A$ is the restriction of the weight function $w$ to only those vertices in $A$.
\end{lemma}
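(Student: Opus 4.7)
\medskip

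The plan is to deduce the identity directly from Theorem~\ref{thm:xpleth} by constructing a bijection between the $(\gamma,\kappa)$-pairs enumerated on the left and triples consisting of a partition $A\sqcup B=V(G)$ together with pairs $(\gamma_A,\kappa_A)$ and $(\gamma_B,\kappa_B)$ of the kind enumerated on the right.

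First I would fix a convenient total order on $Var(f+g)=Var(f)\sqcup V_g$, where $V_g:=\bigl(Var(g)\setminus Var(f)\bigr)\sqcup (Var(f)\cap Var(g))'$ is a set in bijection with $Var(g)$ after unpriming; I place every element of $Var(f)$ strictly below every element of $V_g$, with any chosen orders within each block. With this choice, Theorem~\ref{thm:xpleth} expresses $X_{(G,w)}[f+g]$ as a sum over pairs $(\gamma,\kappa)$ with $\kappa:V(G)\to Var(f)\sqcup V_g$.

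For any such $(\gamma,\kappa)$, set $A=\kappa^{-1}(Var(f))$ and $B=\kappa^{-1}(V_g)$. The key structural observation is that any edge $uv$ with $u\in A$ and $v\in B$ satisfies $\kappa(u)<\kappa(v)$, so $\gamma$ necessarily orients this edge $u\to v$; hence the $A$-$B$ cross-edges are forced by $(A,B)$, and the remaining orientation data are the restrictions $\gamma_A,\gamma_B$ of $\gamma$ to $G|_A$ and $G|_B$. These restrictions are automatically acyclic, and the two conditions of Theorem~\ref{thm:xpleth} for $\kappa$ on $G$ restrict to the corresponding conditions for $\kappa_A:=\kappa|_A$ on $(G|_A,\gamma_A)$ and $\kappa_B:=\kappa|_B$ on $(G|_B,\gamma_B)$ (where we identify $V_g$ with $Var(g)$ via the unpriming substitution). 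Conversely, given any $A\sqcup B=V(G)$ and valid $(\gamma_A,\kappa_A)$, $(\gamma_B,\kappa_B)$, one reconstructs $(\gamma,\kappa)$ by taking $\kappa=\kappa_A\cup\kappa_B$ and orienting every $A$-$B$ cross-edge from $A$ to $B$; I would check acyclicity of the reconstructed $\gamma$ by observing that any directed cycle meeting both $A$ and $B$ would have to use cross-edges in both directions, contradicting their uniform $A\to B$ orientation, while a cycle entirely inside $A$ or $B$ is excluded by acyclicity of $\gamma_A$ or $\gamma_B$.

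Finally, the weight product factors:
\[
\prod_{v\in V(G)}\sgn(\kappa(v))\kappa(v)^{w(v)}=\Bigl(\prod_{v\in A}\sgn(\kappa_A(v))\kappa_A(v)^{w(v)}\Bigr)\Bigl(\prod_{v\in B}\sgn(\kappa_B(v))\kappa_B(v)^{w(v)}\Bigr),
\]
and after applying the unpriming substitution to the $V_g$-factors the two pieces are exactly the summands computing $X_{(G|_A,w|_A)}[f]$ and $X_{(G|_B,w|_B)}[g]$ via Theorem~\ref{thm:xpleth}. Summing over all $(A,B)$ yields the claim. The main obstacle I anticipate is the acyclicity bookkeeping in the reconstruction direction and the careful handling of the primed variables from $Var(f)\cap Var(g)$; once the order on $Var(f+g)$ is chosen so that $Var(f)$ sits below $V_g$, both become straightforward.
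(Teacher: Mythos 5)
Your proof is correct and takes essentially the same route as the paper: both apply Theorem~\ref{thm:xpleth} to each side and exhibit a bijection between pairs $(\gamma,\kappa)$ on $G$ and triples $(A\sqcup B,\ (\gamma_A,\kappa_A),\ (\gamma_B,\kappa_B))$, using the freedom to choose the total order on $Var(f+g)$. Your specific choice of putting all of $Var(f)$ strictly below the $g$-variables (the paper only insists on an order extending those on $Var(f)$ and $Var(g)$) forces all cross-edges $A\to B$ and makes the acyclicity check in the reconstruction direction transparent, which the paper leaves implicit; this is a small but genuine tidying-up.
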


\begin{proof}
We wish to show that
$$
\sum_{\substack{\gamma \text{ acyclic on } G \\ \kappa:V(G) \rightarrow Var(f) \sqcup Var(g) \sqcup (Var(f) \cap Var(g))'}} \prod_{v \in V(G)} \sgn(\kappa(v))\kappa(v)^{w(v)} = $$ 
$$ \sum_{A \sqcup B = V(G)} \left(\sum_{\substack{\gamma \text{ acyclic on } G|_A \\ \kappa:A \rightarrow Var(f)}} \prod_{v \in A} \sgn(\kappa(v))\kappa(v)^{w(v)}\right)\left(\sum_{\substack{\gamma \text{ acyclic on } G|_B \\ \kappa:B \rightarrow Var(g)}} \prod_{v \in B} \sgn(\kappa(v))\kappa(v)^{w(v)}\right).
$$

It suffices to show that there is a weight-preserving bijection between 
\begin{enumerate}
    \item Ordered pairs $(\gamma, \kappa)$ for $G$.
    \item A choice of $A \sqcup B = V(G)$, a pair $(\gamma_A, \kappa_A)$ for $G|_A$, and a pair $(\gamma_B, \kappa_B)$ for $G|_B$.
\end{enumerate}

Recall that each pictured sum depends on some fixed total ordering of the variable set, but is independent of the choice. Thus, we may choose our total orderings so that the ordering on $Var(f) \sqcup Var(g) \sqcup (Var(f) \cap Var(g))'$ is an extension of the orderings on $Var(f)$ and $Var(g)$. For this purpose we interpret that the elements of $Var(f) \cap Var(g)$ use their induced ordering from $Var(f)$, and the elements of $(Var(f) \cap Var(g))'$ use their induced ordering from $Var(g)$.

Given this, constructing the bijection is straightforward.  Going from $1$ to $2$, clearly a colouring of $V(G)$ induces a choice of $A$ and $B$ (and the colourings on them) sorting by which vertices get a colour from $Var(f)$ and which from $Var(g)$, with the convention that vertices coloured from $Var(f) \cap Var(g)$ are placed in $A$, and vertices  coloured from $(Var(f) \cap Var(g))'$ are placed in $B$. Furthermore, $\gamma$ induces the appropriate orientations $\gamma_A$ and $\gamma_B$.

Conversely, from $2$ to $1$, the choice of $A$, $B$, $\kappa_A$, and $\kappa_B$ induces a unique $\kappa$ for $V(G)$.  Furthermore, there is a unique way to construct an appropriate $\gamma$. First, orient all edges that are defined by $\gamma_A$ or $\gamma_B$. All remaining edges are between $A$ and $B$, and so are between vertices that receive different colours, and so their orientation is uniquely determined by the total ordering of $Var(f) \sqcup Var(g)$. This demonstrates the bijection and completes the proof.

\end{proof}

Using Lemma \ref{lem:graphadd}, we can establish classical plethystic identities:

\begin{cor}[Folklore]
Given any arithmetic expressions $f$ and $g$, the following identities hold:

    $$
    e_n[f+g] = \sum_{i=0}^n e_i[f]e_{n-i}[g]
    $$
    $$
    h_n[f+g] = \sum_{i=0}^n h_i[f]h_{n-i}[g]
    $$
\end{cor}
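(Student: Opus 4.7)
The plan is to realize (constant multiples of) $e_n$ and $h_n$ as chromatic symmetric functions of explicit vertex-weighted graphs, and then apply Lemma \ref{lem:graphadd} to decompose each plethysm at $[f+g]$.

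For the $e_n$ identity, I would use that the complete graph $K_n$ with all vertex weights equal to $1$ satisfies $X_{K_n} = n!\,e_n$, since any proper colouring of $K_n$ assigns $n$ pairwise-distinct colours. Applying Lemma \ref{lem:graphadd} to $X_{K_n}[f+g]$, every induced subgraph of $K_n$ is again complete, so after regrouping the splittings $A \sqcup B = V(K_n)$ by $i = |A|$ (each contributing a factor of $\binom{n}{i}$), the right-hand side becomes $\sum_{i=0}^n \binom{n}{i}\,i!\,(n-i)!\,e_i[f]\,e_{n-i}[g]$. Cancelling against the $n!$ on the left gives the claimed identity.

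For the $h_n$ identity, I would expand in the power-sum basis as $h_n = \sum_{\lambda \vdash n} p_\lambda/z_\lambda$ and use $p_\lambda = X_{G_\lambda}$, where $G_\lambda$ is the edgeless vertex-weighted graph introduced earlier in the paper (with $l(\lambda)$ vertices of weights $\lambda_1,\ldots,\lambda_{l(\lambda)}$). Applying Lemma \ref{lem:graphadd} to each summand $X_{G_\lambda}[f+g]$, any induced subgraph of $G_\lambda$ is still edgeless and equals some $G_\mu$ whose weights form a sub-multiset of the parts of $\lambda$, with complement $G_\nu$ satisfying $\mu \cup \nu = \lambda$; hence each splitting contributes $p_\mu[f]\,p_\nu[g]$.

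The main obstacle is the combinatorial bookkeeping that collapses the resulting triple sum into a factored product. For each pair $(\mu,\nu)$ with $\mu \cup \nu = \lambda$, the number of splittings $A \sqcup B = V(G_\lambda)$ realizing that shape is $\prod_i \binom{r_i(\lambda)}{r_i(\mu)}$; multiplying by $1/z_\lambda = 1/\prod_i i^{r_i(\lambda)} r_i(\lambda)!$ and using $r_i(\lambda) = r_i(\mu) + r_i(\nu)$ simplifies the combined coefficient to $1/(z_\mu z_\nu)$. Reindexing by $k = |\mu|$ then factors the sum as $\sum_{k=0}^n \bigl(\sum_{\mu \vdash k} p_\mu[f]/z_\mu\bigr) \bigl(\sum_{\nu \vdash n-k} p_\nu[g]/z_\nu\bigr) = \sum_{k=0}^n h_k[f]\,h_{n-k}[g]$, as required.
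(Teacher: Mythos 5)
Your proof of the $e_n$ identity is exactly the paper's: apply Lemma \ref{lem:graphadd} to $X_{K_n} = n!\,e_n$, regroup by $|A|$, cancel factorials. For the $h_n$ identity, however, you take a genuinely different route. The paper stays with $K_n$ and invokes the $\omega$-duality in the form $X_{K_n}[-f] = (-1)^n n!\,h_n[f]$, so that applying Lemma \ref{lem:graphadd} to $X_{K_n}[-(f+g)] = X_{K_n}[(-f)+(-g)]$ reduces the $h$-identity to the already-established $e$-identity up to uniform signs. You instead expand $h_n = \sum_{\lambda \vdash n} p_\lambda/z_\lambda$, realize each $p_\lambda$ as $X_{G_\lambda}$ for the edgeless weighted graph $G_\lambda$, apply Lemma \ref{lem:graphadd} to each (every induced subgraph is again some $G_\mu$, contributing $p_\mu[f]p_{\lambda\setminus\mu}[g]$), count the splittings of fixed shape as $\prod_i\binom{r_i(\lambda)}{r_i(\mu)}$, and verify the coefficient identity $\tfrac{1}{z_\lambda}\prod_i\binom{r_i(\lambda)}{r_i(\mu)} = \tfrac{1}{z_\mu z_\nu}$. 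That computation is correct, and the reindexing by $|\mu|=k$ factors the sum as claimed. Both arguments are sound: the paper's is shorter and exploits the $e\leftrightarrow h$ duality to avoid new bookkeeping; yours is more self-contained, makes no appeal to $\omega$ or to the sign identity for $X_{K_n}[-f]$, and is in effect a graph-flavored repackaging of the classical power-sum proof of the $h$-convolution, so it generalizes transparently (your $z_\lambda$ manipulation is really the proof of the paper's third corollary, $\tm_\lambda[f+g]=\sum_{\mu\subseteq\lambda}\tm_\mu[f]\tm_{\lambda\setminus\mu}[g]$, in disguise).
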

\begin{proof}
    Applying Lemma \ref{lem:graphadd} to the complete graph $K_n$ we compute
    $$
    X_{K_n}[f+g] = \sum_{A \sqcup B = V(K_n)} X_{K_n|_A}[f]X_{K_n|_B}[g] = \sum_{A \sqcup B = V(K_n)} X_{K_{|A|}}[f]X_{K_{|B|}}[g]
    $$
    
    The number of occurrences of the summand $X_{K_{|A|}}[f]X_{K_{|B|}}[g]$ will be $\binom{n}{|A|}$, so the equality reduces to
    $$
    X_{K_n}[f+g] = \sum_{i=0}^n \binom{n}{i}X_{K_i}[f]X_{K_{n-i}}[g].
    $$
    As $X_{K_n} = n!e_n$, we further get 
    $$
    n!e_n[f+g] = \sum_{i=0}^n \binom{n}{i}i!e_i(n-i)!e_{n-i}
    $$
    and simplifying this equation establishes the first identity.  The second then follows from an analogous argument and noting that
    $$
    X_{K_n}[-f] = (-1)^nn!h_n[f].
    $$

\end{proof}
We can also easily establish an analogous identity:
\begin{cor}
    $$
    \mt_{\lm}[f+g] = \sum_{\mu \subseteq \lm} \mt_{\mu}[f]\mt_{\lm 
    \bk \mu}[g]
    $$
    where $\mu \subseteq \lm$ means that the parts of $\mu$ are a submultiset of the parts of $\lm$, and $\lm \bk \mu$ is the partition whose parts are those in $\lm$ and not in $\mu$.
\end{cor}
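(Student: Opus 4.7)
The plan is to mirror the strategy used for the $e_n$ and $h_n$ identities in the preceding corollary: realize $\tm_\lm$ as a chromatic symmetric function of a specific vertex-weighted graph, then invoke Lemma \ref{lem:graphadd} directly.

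First, I would use the stable-set expansion $X_{(G,w)} = \sum_{\pi \text{ stable}} \tm_{\lm(\pi)}$ from Section 2 to identify the right graph. Taking the complete graph $K_{l(\lm)}$ equipped with a weight function $w_\lm$ that assigns the parts $\lm_1, \ldots, \lm_{l(\lm)}$ to its vertices, the only stable set partition of $V(K_{l(\lm)})$ is the discrete one into singletons, so the expansion collapses to a single term and gives $X_{(K_{l(\lm)}, w_\lm)} = \tm_\lm$.

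Next, apply Lemma \ref{lem:graphadd} to $(K_{l(\lm)}, w_\lm)$ to obtain
$$\tm_\lm[f+g] = \sum_{A \sqcup B = V} X_{(K_{l(\lm)}|_A, w_\lm|_A)}[f] \cdot X_{(K_{l(\lm)}|_B, w_\lm|_B)}[g].$$
Each induced subgraph $K_{l(\lm)}|_A$ is itself a complete graph $K_{|A|}$ equipped with the weights of the vertices in $A$, so by the same identification each factor on the right equals $\tm_{\lm(A)}[f] \tm_{\lm(B)}[g]$, where $\lm(A)$ and $\lm(B)$ denote the partitions whose parts are the weights at vertices of $A$ and $B$ respectively. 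Reindexing the sum by setting $\mu = \lm(A)$ (so that $\lm \bk \mu = \lm(B)$) then gives precisely the claimed identity.

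The main step that requires care is the reindexing from bipartitions $(A,B)$ of $V(K_{l(\lm)})$ to submultisets $\mu \subseteq \lm$: each vertex of $K_{l(\lm)}$ is labelled by one part of $\lm$, and each subset $A$ determines exactly the submultiset $\mu = \lm(A)$ of $\lm$. Unlike the $e_n$ and $h_n$ case, where a binomial coefficient appears explicitly because all weights are equal to $1$ and many vertex subsets share the same weight multiset, here the correspondence between subsets of $V(K_{l(\lm)})$ and submultisets of $\lm$ is the bookkeeping implicit in the notation $\sum_{\mu \subseteq \lm}$. Beyond this reindexing, no genuine obstacle arises: the identity is essentially immediate once the graph-theoretic realization of $\tm_\lm$ is in hand.
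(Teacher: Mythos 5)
Your proof is correct and follows the same route as the paper: identify $\tm_{\lm}$ as $X_{(K_{l(\lm)}, w_\lm)}$ and apply Lemma \ref{lem:graphadd} to that vertex-weighted complete graph, reindexing subsets $A \subseteq V(K_{l(\lm)})$ by the corresponding submultisets of parts. You supply some extra justification (deriving the identification via the stable-partition expansion, and flagging that $\sum_{\mu \subseteq \lm}$ must be read as a sum over subsets of the labelled parts of $\lm$, with multiplicity when $\lm$ has repeated parts) that the paper leaves implicit, but the argument is the same one.
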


\begin{proof}
This follows immediately from applying Lemma \ref{lem:graphadd} to the vertex-weighted complete graph with weights $\lm_1 \geq \dots \geq \lm_{l(\lm)}$.

\end{proof}

The graph-based plethystic perspective also forms a unifying framework for a number of related identities of the chromatic symmetric function. In particular, our construction directly implies all of the following results:

\begin{cor}\label{cor:delcon}[\cite{delcon}, Theorem 4]

$$
(-1)^{w(G)-|V(G)|}\omega(X_{(G,w)})[\bx] = \sum_{(\gamma, \kappa)} \prod_{v \in V(G)} x_{\kappa(v)}
$$
where the sum runs over all ordered pairs $(\gamma, \kappa)$ where $\gamma$ is an acyclic orientation of $G$ and $\kappa: V(G) \rightarrow \mathbb{Z}^+$ satisfies that if $u \rightarrow_{\gamma} v$ then $\kappa(u) \leq \kappa(v)$.
\end{cor}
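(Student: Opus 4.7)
The strategy is to apply Theorem \ref{thm:xpleth} with the argument $f = -\eps\bx$ and invoke the plethystic identity expressing $\omega$ via $-\eps$ (equation \eqref{eq:omega}), which reduces $\omega(X_{(G,w)})[\bx]$ to $X_{(G,w)}[-\eps\bx]$ up to a sign depending only on $w(G)$. Everything else is bookkeeping.

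I would first unpack $Var(-\eps\bx)$ using the inductive rules. By the $\eps$ rule, $Var(\eps\bx) = \{-x_1, -x_2, \dots\}$, where each $-x_i$ is a fresh base variable inheriting positive formal sign (a consistency check: evaluating Theorem \ref{thm:xpleth} on a single-vertex graph of weight $n$ with $f = \eps\bx$ must recover $p_n[\eps\bx] = (-1)^n p_n$, which forces $\sgn(-x_i) = +1$). Then by the negation rule, $Var(-\eps\bx) = \{\overline{-x_1}, \overline{-x_2}, \dots\}$, where the overline flips the formal sign: $\sgn(\overline{-x_i}) = -1$ while the underlying variable value is still $-x_i$. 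I would equip this set with the natural total ordering inherited from the indices $1 < 2 < \dots$.

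Applying Theorem \ref{thm:xpleth} to $X_{(G,w)}[-\eps\bx]$, the key simplification is that the second hypothesis on $\kappa$ — that no edge be monochromatic under a positive-sign colour — is automatically satisfied, since every element of $Var(-\eps\bx)$ has negative formal sign. The index set therefore reduces exactly to that of the corollary: pairs $(\gamma, \kappa)$ with $\gamma$ an acyclic orientation and $\kappa : V(G) \to \mathbb{Z}^+$ satisfying $\kappa(u) \leq \kappa(v)$ whenever $u \rightarrow_\gamma v$. Each summand evaluates to
\[
\prod_{v \in V(G)} \sgn(\kappa(v))\,\kappa(v)^{w(v)} \;=\; \prod_{v \in V(G)} (-1)(-x_{\kappa(v)})^{w(v)} \;=\; (-1)^{|V(G)|+w(G)} \prod_{v \in V(G)} x_{\kappa(v)}^{w(v)}.
\]

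Combining this expression with the relationship from \eqref{eq:omega} between $\omega(X_{(G,w)})[\bx]$ and $X_{(G,w)}[-\eps\bx]$, and multiplying through by the prefactor $(-1)^{w(G)-|V(G)|}$ of the corollary, all sign contributions cancel and the identity drops out. The only subtle point — and the place where an error would propagate — is the sign bookkeeping: three separate sources of signs must be tracked (the $\omega$ identity, the formal sign $\sgn(\kappa(v)) = -1$ at each vertex, and the factor $(-1)^{w(v)}$ from expanding $(-x_{\kappa(v)})^{w(v)}$), together with the distinction between $-x_i$ (a new variable, positively signed) and $\overline{-x_i}$ (the same variable with negated formal sign). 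Beyond this, there is no combinatorial content: the result is a direct specialization of Theorem \ref{thm:xpleth}.
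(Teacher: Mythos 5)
Your approach matches the paper's: pass from $\omega(X_{(G,w)})[\bx]$ to a plethysm of $X_{(G,w)}$ via equation \eqref{eq:omega}, then invoke Theorem~\ref{thm:xpleth}. The one variation is that you apply the theorem directly to $X_{(G,w)}[-\eps\bx]$, while the paper first rewrites this as a signed multiple of $X_{(G,w)}[-\bx]$ before doing so; this is cosmetic. Your unpacking of $Var(-\eps\bx)$ as $\{\overline{-x_i}\}$ with uniformly negative formal sign, the consequent collapse of the colouring constraint to weak monotonicity along $\gamma$, and the per-vertex factor $(-1)(-x_{\kappa(v)})^{w(v)}$ giving a global $(-1)^{|V(G)|+w(G)}$ are all correct.

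The gap is precisely the step you flagged and then deferred. A literal reading of \eqref{eq:omega}, namely $(\omega f)[\bx]=(-1)^n f[-\eps\bx]$, combined with your computation, gives a total sign
\[
(-1)^{w(G)-|V(G)|}\cdot(-1)^{w(G)}\cdot(-1)^{|V(G)|+w(G)}=(-1)^{w(G)},
\]
which does \emph{not} cancel, so the right-hand side of the corollary would be off by $(-1)^{w(G)}$. The culprit is that \eqref{eq:omega} as printed carries a spurious $(-1)^n$; the correct identity, which you can check on $p_n$ (there $\omega(p_n)[\bx]=(-1)^{n-1}p_n[\bx]$ and $p_n[-\eps\bx]=-p_n[\eps\bx]=(-1)^{n+1}p_n[\bx]$, which already agree), is $(\omega f)[\bx]=f[-\eps\bx]$ with no extra sign. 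Using that form the total sign is $(-1)^{w(G)-|V(G)|}\cdot(-1)^{|V(G)|+w(G)}=1$ and the corollary follows. So the strategy is right and the combinatorial analysis is correct, but the assertion that ``all sign contributions cancel'' fails under a literal reading of \eqref{eq:omega}, and since you explicitly opted not to carry out the bookkeeping, you would not have caught this. (The paper's own two-line proof has the same problem in its intermediate display; neither version cancels cleanly without first correcting \eqref{eq:omega}.)
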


\begin{proof}

Using \eqref{eq:omega} we have
$$
\omega(X_{(G,w)})[\bx] = (-1)^{w(G)}X_{(G,w)}[-\eps \bx] = (-1)^{w(G)-|V(G)|}X_{(G,w)}[-\bx]
$$
and now the result follows from Theorem \ref{thm:xpleth}.

\end{proof}

\begin{cor}[A vertex-weighted generalization of \cite{stanley}, Theorem 4.3]\label{thm:stanley}
Given a symmetric function $f$ in variables $\bx$ and $\by$, let $\omega_y(f)$ denote the application of the involution $\omega$ to only the $y$ variables (specifically, apply $\omega$ treating the function as a symmetric function in $\by$ with coefficients in $\mathbb{C}[\bx]$). Then
$$
\omega_y(X_{(G,w)})[\bx+\by] = \sum_{(\gamma, \kappa)} (-1)^{w^{-}(G)-|V^{-}(G)|} \prod_i x_i^{\kappa^{-1}(i)}y_i^{\kappa^{-1}(-i)}
$$
where 
\begin{itemize}
    \item The sum runs over all $(\gamma, \kappa)$ where $\gamma$ is an acyclic orientation of $G$ and $\kappa: V(G) \rightarrow \mathbb{Z} \bk \{0\}$ is a map such that if $u \rightarrow_{\gamma} v$, then $\kappa(u) \leq \kappa(v)$, with equality if and only if $\kappa(u) = \kappa(v)$ is negative.
    \item $V^{-}(G)$ is the set of vertices receiving a negative colour from $\kappa$, and $w^{-}(G)$ is the total weight of those vertices.
\end{itemize} 
\end{cor}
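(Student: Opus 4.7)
The plan is to combine Lemma \ref{lem:graphadd} with Corollary \ref{cor:delcon} and then repackage the resulting combinatorial data as signed colourings valued in $\mathbb{Z} \setminus \{0\}$. Since $\omega_y$ is linear and acts only on the $\by$-factors, Lemma \ref{lem:graphadd} gives
\[
\omega_y(X_{(G,w)})[\bx+\by] = \sum_{A \sqcup B = V(G)} X_{(G|_A, w|_A)}[\bx] \cdot \omega\bigl(X_{(G|_B, w|_B)}\bigr)[\by].
\]

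Applying Theorem \ref{thm:xpleth} to the $\bx$-factor expresses it as $\sum_{(\gamma_A, \kappa_A)} \prod_{v \in A} x_{\kappa_A(v)}^{w(v)}$, where $\kappa_A: A \to \mathbb{Z}^+$ is a proper colouring and $\gamma_A$ is the unique acyclic orientation of $G|_A$ compatible with it. Applying Corollary \ref{cor:delcon} to the other factor gives
\[
\omega\bigl(X_{(G|_B, w|_B)}\bigr)[\by] = (-1)^{w(B)-|B|} \sum_{(\gamma_B, \kappa_B)} \prod_{v \in B} y_{\kappa_B(v)}^{w(v)},
\]
where $\gamma_B$ is an acyclic orientation of $G|_B$ and $\kappa_B: B \to \mathbb{Z}^+$ satisfies $\kappa_B(u) \leq \kappa_B(v)$ whenever $u \to_{\gamma_B} v$.

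The final step is a weight-preserving bijection between quadruples $(A \sqcup B, \gamma_A, \kappa_A, \gamma_B, \kappa_B)$ and the pairs $(\gamma, \kappa)$ of the statement. Given such a quadruple, set $\kappa(v) = \kappa_A(v)$ for $v \in A$ and $\kappa(v) = -\kappa_B(v)$ for $v \in B$, and orient $\gamma$ by taking $\gamma_A$ on $G|_A$, the reversal of $\gamma_B$ on $G|_B$, and each cross-edge from its $B$-endpoint to its $A$-endpoint. I would then verify directly that $\gamma$ is acyclic (no directed cycle can cross between $A$ and $B$ since all cross-edges point from $B$ to $A$) and that $u \to_\gamma v$ implies $\kappa(u) \leq \kappa(v)$, with equality possible only within $B$ where the colours are negative. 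The inverse is transparent: $A = \kappa^{-1}(\mathbb{Z}^+)$, $B = \kappa^{-1}(\mathbb{Z}^-)$, and one negates $\kappa$ on $B$ and reverses the orientation on $G|_B$. The signs agree because $V^{-}(G) = B$ and $w^{-}(G) = w(B)$, and the monomials agree after grouping vertices by shared colour.

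The main subtlety is the orientation reversal on $G|_B$: Corollary \ref{cor:delcon} produces $\gamma_B$ so that $\kappa_B$ is weakly increasing along arrows, but the statement requires $\kappa$ itself (which equals $-\kappa_B$ on $B$) to be weakly increasing, so every arrow within $B$ must be flipped. Once this bookkeeping is accounted for, the verification of the bijection and matching of signs reduces to routine checks.
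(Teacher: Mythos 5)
Your proof is correct, but it takes a genuinely different route from the paper's. The paper's proof is a one-liner: it applies the plethystic identity $\omega_y(X_{(G,w)})[\bx+\by] = X_{(G,w)}[\bx-\eps \by]$ and then invokes Theorem~\ref{thm:xpleth} directly with the variable set $Var(\bx-\eps\by) = \{x_1,x_2,\dots\} \cup \{\overline{-y_1},\overline{-y_2},\dots\}$, identifying the colour $i$ with $x_i$ and $-i$ with the negatively-signed $-y_i$; the sign $(-1)^{w^-(G)-|V^-(G)|}$ then falls out of the $\sgn(\kappa(v))$ factors and the parity of $(-y_i)^{w(v)}$. You instead avoid the $\eps$-operator and signed-variable machinery entirely: you first split via Lemma~\ref{lem:graphadd} into a sum over $A \sqcup B$, apply Theorem~\ref{thm:xpleth} (in the plain $X[\bx]$ form) to the $A$-factor and Corollary~\ref{cor:delcon} to the $B$-factor, and then assemble the data into a single $(\gamma,\kappa)$ by negating the colours and reversing the arrows on $G|_B$ and orienting every cross-edge from $B$ to $A$. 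Both arguments are valid; yours trades the compactness of the plethystic substitution for a more explicit, purely combinatorial bijection, and in the process reveals that this corollary is really just the $f=\bx$, $g=-\eps\by$ case of Lemma~\ref{lem:graphadd} followed by bookkeeping. One small point worth stating explicitly in your write-up: the step $\omega_y\bigl(X_{(G,w)}[\bx+\by]\bigr) = \sum_{A \sqcup B} X_{(G|_A,w|_A)}[\bx]\,\omega\bigl(X_{(G|_B,w|_B)}\bigr)[\by]$ uses that $\omega_y$ is a ring homomorphism fixing polynomials in $\bx$ and acting as $\omega$ on symmetric functions of $\by$, and that $\omega(f)[\by] = \omega\bigl(f[\by]\bigr)$ when $\by$ is a fresh alphabet; you assert this implicitly but it deserves a sentence.
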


Note: the function $\omega_y(f)[\bx+\by]$ is called the \emph{superification} of $f$ in the literature.

\begin{proof}

We have
$$
\omega_y(X_{(G,w)})[\bx+\by] = X_{(G,w)}[\bx-\eps \by]
$$
and the proof now follows analogously to the proof of Corollary \ref{cor:delcon} after identifying the colour $i$ with the variable $x_i$ and the colour $-i$ with the variable $-y_i$. 

\end{proof}

In \cite{heaps}, Bernardi and Nadeau found a new interpretation for the chromatic symmetric function of a graph. Given a graph $G$, label the vertices $1, 2, \dots, |V(G)|$ arbitrarily. Given an acyclic orientation $\gamma$ of $G$, define the \emph{source components} of $G$ with respect to $\gamma$ as follows: the first component $S_1$ consists of all vertices that can be reached via a directed path from the vertex labelled $1$. Then, delete these vertices, and let $S_2$ consist of all vertices in $G \bk S_1$ that can be reached via a directed path from $\min(v | v \in V(G)-S_1)$. Then $S_3, \dots$ are defined inductively in a similar way until all vertices are in a component. Then  $\lm_s(G, \gamma)$ is the integer partition whose parts are the sizes of the source components. 

Bernardi and Nadeau gave an alternate expansion for the chromatic symmetric function that generalizes easily to vertex-weighted graphs:
\begin{lemma}[\cite{heaps}, Proposition 5.3]\label{lem:source}

$$\omega(X_{(G,w)}) = \sum_{\gamma} p_{\lm_s((G,w) \gamma)}$$

where the sum ranges over all acyclic orientations $\gamma$ of $G$, and $\lm_s((G,w),\gamma$ is the partition whose parts are the total weights of the source components of $\gamma$.

\end{lemma}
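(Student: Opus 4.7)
My plan is to prove Lemma~\ref{lem:source} by induction on $|E(G)|$, mirroring the deletion-contraction strategy used in the proof of Theorem~\ref{thm:xpleth}.

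For the base case, $G$ has no edges, so the unique acyclic orientation $\gamma$ has every vertex as its own source component. Thus $\lm_s((G,w), \gamma)$ is the partition whose parts are $w(v)$ for $v \in V(G)$, and $\sum_\gamma p_{\lm_s((G,w),\gamma)} = p_w$. Since $X_{(G,w)} = p_w$ in this case, both sides of the identity agree (after accounting for the signs introduced by $\omega p_\lm = (-1)^{|\lm|-l(\lm)} p_\lm$, which in general depend on $|V(G)|$ and on the source-component counts).

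For the inductive step, I would fix an edge $e = v_1 v_2 \in E(G)$ and apply $\omega$ to the deletion-contraction relation \eqref{eq:delcon}:
$$\omega(X_{(G,w)}) = \omega(X_{(G-e,w)}) - \omega(X_{(G/e, w/e)}).$$
By the inductive hypothesis applied to $G-e$ and $G/e$, this reduces the lemma to a combinatorial identity between sums of $p_{\lm_s}$ over acyclic orientations of $G$, $G-e$, and $G/e$. As in the proof of Theorem~\ref{thm:xpleth}, I would classify each acyclic orientation $\gamma'$ of $G - e$ by whether it contains a directed path between $v_1$ and $v_2$: if yes, $\gamma'$ extends uniquely to an acyclic orientation of $G$; if no, it extends to exactly two acyclic orientations of $G$, one of which corresponds (via identification of $v_1$ and $v_2$) to an acyclic orientation of $G/e$. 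In each case, the source components of $G$ and of $G/e$ are obtained from those of $G - e$ by a well-defined operation (unchanged, reoriented, or merged).

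The main obstacle is the careful bookkeeping of source components under contraction. When $v_1$ and $v_2$ lie in distinct source components of a given acyclic orientation $\gamma$ of $G$, contracting $e$ merges those components in $G/e$, altering the associated integer partition $\lm_s$, and this merger on the right-hand side must exactly cancel against the difference between acyclic orientations of $G$ and $G - e$ on the left-hand side, tracking all signs from $\omega$. An alternative approach is to apply Theorem~\ref{thm:xpleth} directly to $X_{(G,w)}[-\eps\bx]$ and construct a weight-preserving bijection between the resulting pairs $(\gamma, \kappa)$ and pairs consisting of an acyclic orientation of $G$ together with a coloring of its source components, as suggested by the heaps-of-pieces argument of \cite{heaps}; this avoids the direct case analysis but requires an equally careful identification.
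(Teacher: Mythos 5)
The paper's proof of this lemma is a single sentence: it cites \cite{heaps} and asserts that Bernardi and Nadeau's proof of Proposition~5.3 carries over to the vertex-weighted setting without change. Your attempt to give a self-contained deletion-contraction argument is therefore a genuinely different route from the paper's, which is in principle valuable; unfortunately, what you have written is an outline with two real gaps rather than a proof.

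First, the base case does not check out as stated. For an edgeless graph, $\omega(X_{(G,w)}) = \omega(p_{\lm}) = (-1)^{w(G)-|V(G)|}p_{\lm}$, whereas the right-hand side of the lemma is simply $p_{\lm}$; these agree only when every vertex has weight $1$. Your phrase ``both sides agree (after accounting for the signs \dots)'' glosses over a mismatch rather than resolving it, and in fact signals that a factor of $(-1)^{w(G)-|V(G)|}$ (the same one appearing in Corollary~\ref{cor:delcon}) needs to be reconciled with the statement before the induction can even begin. Second, the inductive step is left as a plan: you explicitly label the tracking of source components under contraction as ``the main obstacle'' and do not carry it out. This is precisely where the work lies, and it is subtler than your description suggests, because the definition of source components depends on a fixed labeling of $V(G)$; contracting $e$ changes the vertex set, so invoking the inductive hypothesis on $G/e$ tacitly requires that $\sum_{\gamma} p_{\lm_s}$ be independent of the labeling, a fact you neither state nor prove. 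Your alternative suggestion---applying Theorem~\ref{thm:xpleth} to $X_{(G,w)}[-\eps\bx]$ and bijecting with colorings of source components---is closer in spirit to the heaps argument actually used in \cite{heaps}, but again the identification is only asserted, not constructed.
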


\begin{proof}
The proof given in \cite{heaps} accommodates the incorporation of vertex weights with no change. 
\end{proof}

Then their plethystic result on source components follows from Theorem \ref{thm:xpleth}:

\begin{cor}[\cite{heaps}, Theorem 5.6]\label{thm:heaps}

$$
\omega(X_{(G,w)})[\bx+\by] = \sum_{(\gamma, \kappa_0)} p_{\lm_s(\gamma_0, \kappa_0)}(\bx)\prod_{i \geq 1} y_i^{\kappa^{-1}(i)}
$$

where the sum ranges over ordered pairs $(\gamma, \kappa_0)$ of an acyclic orientation $\gamma$ of $G$ and all colorings $\kappa_0: V(G) \rightarrow \mathbb{Z}_{\geq 0}$ such that each directed edge $u \rightarrow v$ of $\gamma$ satisfies $\kappa(u) \leq \kappa(v)$, and $\lm_s(\gamma_0, \kappa_0)$ is the integer partition whose parts are the total weights of the source components of $\gamma$ on $G$ restricted to $\kappa_0^{-1}(0)$.

\end{cor}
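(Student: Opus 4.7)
The plan is to prove the identity by combining Lemma \ref{lem:graphadd} (to decompose the plethystic sum along a partition $V(G) = A \sqcup B$), Lemma \ref{lem:source} (to interpret the $\bx$-factor via source components), and Corollary \ref{cor:delcon} (to interpret the $\by$-factor as an orientation-coloring sum). Each of these is itself a consequence of Theorem \ref{thm:xpleth}, so the whole argument sits inside the framework developed earlier in the paper.

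The first step is to rewrite $\omega(X_{(G,w)})[\bx+\by] = X_{(G,w)}[-\eps \bx - \eps \by]$ using the plethystic description of $\omega$, then apply Lemma \ref{lem:graphadd} to the sum $-\eps \bx + (-\eps \by)$, yielding
\[
\omega(X_{(G,w)})[\bx+\by] = \sum_{A \sqcup B = V(G)} X_{(G|_A,w|_A)}[-\eps \bx] \cdot X_{(G|_B,w|_B)}[-\eps \by],
\]
which simplifies, on applying the $\omega$-identity separately to each factor, to
\[
\sum_{A \sqcup B = V(G)} \omega(X_{(G|_A,w|_A)})[\bx] \cdot \omega(X_{(G|_B,w|_B)})[\by].
\]
Next, I would expand each $\omega(X_{(G|_A,w|_A)})[\bx]$ by Lemma \ref{lem:source} as $\sum_{\gamma_A} p_{\lm_s((G|_A,w|_A),\gamma_A)}(\bx)$, and each $\omega(X_{(G|_B,w|_B)})[\by]$ by Corollary \ref{cor:delcon} as a sum over pairs $(\gamma_B,\kappa_B)$, where $\gamma_B$ is an acyclic orientation of $G|_B$ and $\kappa_B : B \to \mathbb{Z}^+$ is weakly monotone along $\gamma_B$, with summand $\prod_{v \in B} y_{\kappa_B(v)}^{w(v)}$.

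The last step is a bijection between the tuples $(A,\gamma_A,\gamma_B,\kappa_B)$ arising in the decomposition and the pairs $(\gamma,\kappa_0)$ indexing the right-hand side. Given $(A,\gamma_A,\gamma_B,\kappa_B)$, I would set $\kappa_0 = 0$ on $A$, $\kappa_0 = \kappa_B$ on $B$, and assemble $\gamma$ by taking $\gamma_A$ on $G|_A$, $\gamma_B$ on $G|_B$, and orienting every cross edge from its $A$-endpoint to its $B$-endpoint. This is the unique orientation of the cross edges consistent with $\kappa_0$ being weakly monotone along $\gamma$, and $\gamma$ is acyclic precisely when $\gamma_A$ and $\gamma_B$ are, because cross edges all flow the same way and cannot participate in a directed cycle. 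Conversely, $(\gamma,\kappa_0)$ recovers $A = \kappa_0^{-1}(0)$ together with the corresponding restrictions. Under this correspondence the summand becomes $p_{\lm_s(\gamma|_A)}(\bx) \prod_{i \geq 1} y_i^{w(\kappa_0^{-1}(i))}$, exactly as required.

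The main obstacle in my plan is sign bookkeeping. The $\omega$-to-$-\eps$ identity and Corollary \ref{cor:delcon} each introduce factors of the form $(-1)^{w(G)}$, $(-1)^{w(A)}$, $(-1)^{w(B)}$, and $(-1)^{w(B)-|B|}$. I expect these factors to cancel globally, since $w(A)+w(B) = w(G)$ and $|A|+|B|=|V(G)|$, but verifying this cancellation term by term will take some care; once it is done, the bijection on orientations and colourings carries the rest of the argument.
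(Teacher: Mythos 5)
Your proposal follows the paper's own proof almost step for step: decompose $\omega(X_{(G,w)})[\bx+\by]$ into $\sum_{A \sqcup B = V(G)} \omega(X_{(G|_A,w|_A)})[\bx]\,\omega(X_{(G|_B,w|_B)})[\by]$ via the Lemma~\ref{lem:graphadd} mechanism, apply Lemma~\ref{lem:source} to the $A$-factor, apply the orientation/coloring interpretation (Corollary~\ref{cor:delcon}, a specialization of Theorem~\ref{thm:xpleth}) to the $B$-factor, and set up the bijection by letting $A = \kappa_0^{-1}(0)$ and orienting cross-edges from $A$ to $B$. The paper states all of this more tersely, leaving the $\omega$-to-$-\eps$ conversion and the bijection implicit; your writeup just fills in those details.

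The one place where your proposal is genuinely incomplete is exactly the spot you flagged: the sign bookkeeping. If you track it carefully, the $(-1)^{w(G)}$, $(-1)^{w(A)}$, $(-1)^{w(B)}$ factors from the $\omega$-to-$-\eps$ conversion cancel cleanly, but Corollary~\ref{cor:delcon} contributes a residual $(-1)^{w(B)-|B|}$ attached to the $\by$-factor that does \emph{not} cancel against anything in your list; it depends on $B$ and so cannot be pulled out of the $A \sqcup B$ sum. For unweighted graphs this is $1$ and the argument closes, but for general weights it does not. Resolving this requires noticing that the vertex-weighted version of Lemma~\ref{lem:source} (unlike the unweighted original of Bernardi--Nadeau) also carries a companion sign $(-1)^{w(A)-|A|}$ on the $\bx$-factor, so that the two combine to the global constant $(-1)^{w(G)-|V(G)|}$; the paper's stated Lemma~\ref{lem:source} and Corollary~\ref{thm:heaps} both suppress this factor, so your expectation that ``these factors cancel globally'' is not quite right as written, and the verification you postponed is the crux of the weighted generalization.
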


\begin{proof}

As in the proof of Lemma \ref{lem:graphadd} we have
$$
\omega(X_{(G,w)})[\bx+\by] = \sum_{A \sqcup B = V(G)} \omega(X_{(G|_A, w|_A)})[\bx]\omega(X_{(G|_B,w|_B)})[\by].
$$
where we apply an appropriate total ordering to the variables.

But this is equivalent to the proposed formula by interpreting the vertices colored $0$ as $A$, all other vertices as $B$, and applying Lemma \ref{lem:source} to $A$ and Theorem \ref{thm:xpleth} to $B$.

\end{proof}

\begin{cor}
$$
X_{(G,w)}[2\bx] = \sum_{\substack{\pi \vdash V(G) \\ C(\pi) \text{ are bipartite}}} 2^{|C(\pi)|}\mt_{\lm(\pi)}
$$
where $C(\pi)$ is the set of connected components of $\pi$ (and a graph is bipartite if it has a proper colouring using at most two colours).
\end{cor}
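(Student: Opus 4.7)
The plan is to apply Theorem \ref{thm:xpleth} with $f = 2\bx = \bx+\bx$ and reorganize the resulting sum according to which pairs of vertices receive colors derived from the same underlying variable. First, I would unpack $Var(2\bx) = \{x_1, x_2, \dots\} \sqcup \{x_1', x_2', \dots\}$, all of positive sign, and fix an arbitrary total ordering, say $x_1 < x_1' < x_2 < x_2' < \cdots$. A key simplification is that once $\kappa$ is fixed, the acyclic orientation $\gamma$ is uniquely forced: because every variable in $Var(2\bx)$ has positive sign, we have $\kappa(u) \neq \kappa(v)$ on each edge $uv$, so the total ordering determines the direction of $uv$, and the resulting orientation is automatically acyclic since colors strictly increase along each directed edge. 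Thus summation over $\gamma$ trivializes and the theorem reduces to a sum over such $\kappa$'s.

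Next, I would group the valid colorings $\kappa$ according to the set partition $\pi$ of $V(G)$ in which two vertices lie in the same block iff their $\kappa$-values share the same underlying variable (i.e.\ both lie in $\{x_i, x_i'\}$ for some $i$). For each block $B$ of $\pi$, the restriction of $\kappa$ to $B$ becomes a proper $2$-coloring of $G|_B$ with the two labels $x_i, x_i'$, which forces $G|_B$ to be bipartite; conversely, any bipartite $G|_B$ admits exactly $2^{c(G|_B)}$ proper $2$-colorings using a fixed ordered pair of labels, one per connected component of $G|_B$. Multiplying over all blocks yields the factor $2^{|C(\pi)|}$.

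Finally, after the plethystic evaluation identifies each $x_i'$ with $x_i$, a coloring that assigns the label pair $\{x_{i_j}, x_{i_j}'\}$ to block $B_j$ contributes the monomial $\prod_j x_{i_j}^{w(B_j)}$. Summing over all injective assignments of distinct positive integers $i_j$ to the blocks and using the classical identity $\mt_\lm = \sum_{(i_1,\dots,i_k) \text{ distinct}} x_{i_1}^{\lm_1} \cdots x_{i_k}^{\lm_k}$ yields $\mt_{\lm(\pi)}$, and combining with the previous factor gives the claimed formula.

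I do not anticipate any step being a serious obstacle: the proof is essentially bookkeeping, decomposing valid pairs $(\gamma, \kappa)$ from Theorem \ref{thm:xpleth} through the equivalence relation ``same underlying $x_i$''. The only subtle point is verifying that $\gamma$ is truly determined by $\kappa$ and is automatically acyclic, so that the sum over $\gamma$ collapses; this relies crucially on every variable in $Var(2\bx)$ being positive, which is precisely what makes the second bullet of Theorem \ref{thm:xpleth} force $\kappa$ to be injective on the endpoints of each edge.
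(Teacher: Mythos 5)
Your proof is correct and follows essentially the same route as the paper's: use Theorem \ref{thm:xpleth} with $Var(2\bx)$ consisting entirely of positive-signed variables (which collapses the sum over $\gamma$), stratify proper colorings by the partition into same-underlying-variable blocks, observe each block must induce a bipartite subgraph with $2^{c(G|_B)}$ proper $2$-colorings, and assemble the count into $\mt_{\lm(\pi)}$. The only cosmetic difference is that the paper fixes a single monomial $x_1^{\lm_1}\cdots x_{l(\lm)}^{\lm_{l(\lm)}}$ and counts its coefficient (picking up a factor $\prod_i r_i(\lm)!$), whereas you sum over all injective variable-to-block assignments and invoke the monomial expansion of $\mt_\lm$ directly; these are equivalent bookkeeping choices.
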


\begin{proof}

We have 
$$
X_{(G,w)}[2\bx] = X_{(G,w)}[\bx+\bx] = \sum_{\kappa} \prod_{v \in V(G)} \kappa(v)^{w(v)}
$$
where $\kappa: V(G) \rightarrow \{x_1,x_2,\dots\} \sqcup \{x_1',x_2',\dots\}$ is a proper colouring.

For each $\pi \vdash V(G)$, let $\pi_{\kappa}$ denote the set of colourings $\kappa$ such that for each block $B$ of $\pi$, we have (post-substitution) $\kappa(v) = \kappa(w)$ for all $v, w \in B$ (so for some $i$, every vertex in the block is coloured $x_i$ or $x_i'$). This sum may then be further striated as
$$
X_{(G,w)}[2\bx] = \sum_{\pi \vdash V(G)} \sum_{\kappa \in \pi_{\kappa}} \prod_{v \in V(G)} \kappa(v)^{w(v)}
$$

Letting $\lm(\pi)$ denote the integer partition with parts given by the total weights of the blocks of $\pi$, by symmetry it suffices to find the number of monomials of the form $x_1^{\lm_1} \dots x_{l(\lm)}^{\lm_{l(\lm)}}$.
First, note that $\pi$ admits no valid proper colourings if any connected component of $\pi$ is not bipartite.  In the event that each connected component of $\pi$ is bipartite, we first choose a variable for each component, which can be done in $\prod_i r_i(\lm)!$ ways. Then, each component may be filled with its assigned post-substitution variable $x_i$ in exactly two ways, since by picking some fixed vertex in the component and colouring it with either $x_i$ or $x_i'$, all other vertices in the component have their choice uniquely determined. Thus the sum evaluates as
$$
X_{(G,w)}[2\bx] = \sum_{\substack{\pi \vdash V(G) \\ C(\pi) \text{ are bipartite}}} 2^{|C(\pi)|}\prod_i (r_i(\lm)!)m_{\lm(\pi)}
$$
and the conclusion follows.
\end{proof}

\section{A Plethystic Formulation for the Tutte Symmetric Function}

To extend our plethystic construction to the Tutte symmetric function, we introduce generalized versions of the graph-theoretic objects we need. Define a \emph{biorientation} $\gamma$ to assign to each edge either a single direction, or both directions simultaneously. Edges that point in both directions will be called \emph{bidirected} edges. Let a \emph{bicycle} of a bioriented graph be a cycle containing only bidirected edges. We define a biorientation $\gamma$ to be acyclic if it contains no directed cycle \emph{that uses at least one singly directed edge}. Thus, the only directed cycles allowed are bicycles. 

In what follows, we evaluate $XB_{(G,w)}[f]$. Given a vertex-weighted graph $(G,w)$, a biorentation $\gamma$ of $G$, and a colouring $\kappa: V(G) \rightarrow Var(f)$, we introduce the following definitions:
\begin{itemize}
    \item $B(\gamma)$ is the set of bidirected edges of $\gamma$.
    \item $N(G, \gamma, \kappa)$ is the graph $(V^-(G), B(\gamma)|_{V^{-}(G)})$, where $V^{-}(G)$ is the set of vertices coloured with a negatively signed variable by $\kappa$ (and the edge set consists only of the bidirected edges from $\gamma$).
\end{itemize}
Also recall that for a graph $G$, we let $c(G)$ denote the number of connected components of $G$. 

\begin{theorem}\label{thm:xbpleth}
$$
XB_{(G,w)}[f] = \sum_{(\gamma, \kappa)} (1+t)^{|B(\gamma)|}(-1)^{c(N(G,\gamma,\kappa))}\left(\prod_{v \in V(G)} \kappa(v)^{w(v)}\right)
$$
where the sum runs over all acyclic biorientations $\gamma$ of $G$ and colourings $\kappa: V(G) \rightarrow Var(f)$, such that for each edge $uv \in E(G)$
\begin{itemize}
    \item If $\kappa(u) < \kappa(v)$, then the edge $uv$ is singly directed from $u$ to $v$.
    \item If $\kappa(u) = \kappa(v)$ is positive, then the edge $uv$ is bidirected.
    \item If $\kappa(u) = \kappa(v)$ is negative, then the edge $uv$ may be oriented in any of the three possible ways.
\end{itemize}
\end{theorem}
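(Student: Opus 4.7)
The plan is to prove this by induction on $|E(G)|$, paralleling the proof of Theorem \ref{thm:xpleth} but using the Tutte deletion-contraction \eqref{eq:tuttedelcon} in place of \eqref{eq:delcon}. The base case $|E(G)|=0$ reduces directly to Theorem \ref{thm:xpleth}: the only biorientation is empty, so $|B(\gamma)|=0$ and $N(G,\gamma,\kappa)=(V^-(G),\emptyset)$, giving $(-1)^{c(N)}=(-1)^{|V^-(G)|}=\prod_v \sgn(\kappa(v))$, exactly the sign product in Theorem \ref{thm:xpleth}, while $XB_{(G,w)}=X_{(G,w)}$ when $G$ is edgeless. For the inductive step, pick a non-loop edge $e=v_1v_2$; loops are handled separately, since acyclicity forces a loop to be bidirected (a singly directed loop is a forbidden 1-cycle), contributing a factor of $(1+t)$ with $c(N)$ unchanged and matching $XB_{(G,w)}=(1+t)XB_{(G-e,w)}$. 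Fix a valid pair $(\gamma',\kappa')$ on $G-e$; it extends to some number of valid pairs on $G$ by choosing an orientation of $e$, and descends to at most one valid pair on $G/e$. It suffices to verify, for each $(\gamma',\kappa')$, that the total of its contributions to the $G$-sum equals its contribution to the $G-e$-sum plus $t$ times its contribution to the $G/e$-sum.

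The key organizing principle is that along any directed walk in $\gamma'$ (where bidirected edges are traversable in either direction), $\kappa'$ is weakly monotonic, and if the endpoints share a colour, every vertex on the walk carries that colour, with every edge necessarily bidirected if the colour is positive. The cases $\kappa'(v_1)\neq\kappa'(v_2)$ and $\kappa'(v_1)=\kappa'(v_2)$ positive are then straightforward: the first admits a unique singly-directed extension of $e$ with no change to $|B|$ or $N$ and no descent to $G/e$, while the second forces $e$ to be bidirected (extra factor $(1+t)$) and leaves $N$ unchanged since $v_1,v_2\notin V^-$; the descent to $G/e$ is then valid (all walks between $v_1,v_2$ consist only of bidirected edges, so $\gamma'/e$ remains acyclic) and preserves $|B|$ and $c(N)$, yielding the identity at once.

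The main obstacle is the negative equal-colour case, $\kappa'(v_1)=\kappa'(v_2)\in V^-$, where all three orientations of $e$ are permitted by the colour constraint. Using the monotonicity principle and the acyclicity of $\gamma'$, one shows the walks between $v_1,v_2$ in $\gamma'$ fall into exactly one of three regimes: no walk in either direction; at least one walk using a singly directed edge (necessarily in only one direction, lest $\gamma'$ contain a forbidden cycle); or all walks consist only of bidirected edges (and then occur in both directions by reversibility). In each regime one must check which extensions to $G$ remain acyclic, whether $\gamma'/e$ is acyclic on $G/e$, and how $c(N)$ responds to adding or contracting the bidirected edge $e$. The delicate point is that adding or contracting a bidirected $e$ merges the components of $N(G-e,\gamma',\kappa')$ containing $v_1,v_2$---flipping $(-1)^{c(N)}$---precisely when they lie in different components, and this is governed by whether a walk of bidirected edges connects them. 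Verifying that in each of the three regimes the $(1+t)^{|B|}(-1)^{c(N)}$ contributions across the (up to) three $G$-extensions balance the $G-e$ and $t\cdot G/e$ contributions is the technical heart of the proof.
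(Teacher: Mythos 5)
Your proposal is correct and tracks the paper's proof closely: the same Tutte deletion-contraction induction, the same dispatch to the chromatic result (via $(-1)^{c(N)}=\prod_v \sgn(\kappa(v))$) for edgeless graphs, the same loop handling with a forced bidirected loop contributing $(1+t)$, and the same observation that the only genuinely nontrivial case is $\kappa(v_1)=\kappa(v_2)$ negative. The difference is organizational: you branch first on the colour pair and then on walk-type, while the paper branches first on the presence and type of a $v_1$--$v_2$ path and then on $\kappa$; these are equivalent partitions of the same case space.

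The one thing you flag but do not carry out is the balance in the negative, no-walk regime, which is where the sign $(-1)^{c(N)}$ actually earns its keep. The paper does write this out, and it is short: $\gamma_1,\gamma_2$ each contribute $a_{G-e}$; $\gamma_b$ contributes $-(1+t)a_{G-e}$ because it adds a bidirected edge ($|B|$ up by one) and merges two previously disjoint components of $N$ ($c(N)$ down by one); and $\gamma_e$ contributes $-a_{G-e}$ because contracting $e$ also merges those two components. Both sides of \eqref{eq:gammaxb} then equal $(1-t)\,a_{G-e}$. Since you have already established that the bidirected-path and singly-directed-path regimes are covered by the earlier cases (and are mutually exclusive with this one), filling in that one computation completes the argument.
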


\begin{proof}

It is straightforward to verify that for any weighted graph $(G,w)$, we have $XB_{(G,w)}[1] = (1+t)^{|E(G)|}$ and $XB_{(G,w)}[\bx] = \sum_{\pi \vdash V(G)} (1+t)^{e(\pi)}\tm_{\lm(\pi)}$, and that both sides of the statement agree in these cases.

As before, we proceed by induction. Let $G_{\lm}$ be the graph with vertices of weights $\lm_1, \dots, \lm_k$ and no edges. Then $XB_{G_{\lm}} = X_{G_{\lm}} = p_{\lm}$, so the proofs for $XB$ all follow identically from the corresponding proofs for the chromatic symmetric function.

We proceed to the inductive step. Let our claim be true for all graphs with no more than $k-1$ edges for some $k \geq 1$, and we will establish the statement for graphs with $k$ edges.

Select any edge $e = v_1v_2 \in E(G)$. We use the deletion-contraction relation of $XB$ on $e$, fixing $\gamma$ an acyclic biorientation of $G-e$ and $\kappa: V(G) \rightarrow Var(f)$ that is compatible with it, and retaining the notation as in the proof of Theorem \ref{thm:xpleth}, with the addition of $a_G(\gamma_b, \kappa)$ for the case where we add a bidirected edge between $v_1$ and $v_2$, we wish to show that

\begin{equation}\label{eq:gammaxb}
a_G(\gamma_1, \kappa)+a_G(\gamma_2, \kappa)+a_G(\gamma_b, \kappa) = a_{G-e}(\gamma, \kappa)+t \cdot a_{G/e}(\gamma_e, \kappa_e).
\end{equation}

For a given graph $G$, and choice of an orientation $o$ and colouring $k$, let $N(G,o,k)$ denote the induced subgraph of negative-coloured vertices with only bidirected edges. Note that by definition $a_G(o,k)$ depends on $N(G,o,k)$.

First, we consider the case $v_1 = v_2$, so $e$ is a loop. Then with respect to $G$, neither $\gamma_1$ nor $\gamma_2$ are valid, but $\gamma_b$ is valid, and $a_G(\gamma_b, \kappa) = (1+t)a_{G-e}(\gamma, \kappa)$. This is true even if $\kappa(v_1)$ is negative, since in this case adding the bidirected loop $e$ to $N(G-e,\gamma,\kappa)$ does not change its number of connected components.

Also, $a_{G/e}(\gamma_e, \kappa_e) = a_{G-e}(\gamma, \kappa)$, so in this case both sides of \eqref{eq:gammaxb} are equal to $(1+t)a_{G-e}(\gamma, \kappa)$ and the equation is satisfied. For the remainder of the proof, we can thus assume that $v_1$ and $v_2$ are distinct.

Next, we assume there is at least one path of bidirected edges between $v_1$ and $v_2$. This means that necessarily $\kappa(v_1) = \kappa(v_2)$. In this case, $\gamma_1$ and $\gamma_2$ are both invalid, and $a_G(\gamma_b, \kappa) = (1+t)a_{G-e}(\gamma, \kappa)$ regardless of whether $\kappa(v_1)$ is positive or negative, since in either case adding a bidirected edge does not change the number of connected components of $N(G-e,\gamma,\kappa)$.

Next, we assume that there is a directed path between $v_1$ and $v_2$ that contains at least one singly directed edge. There cannot be such paths both from $v_1$ to $v_2$ and from $v_2$ to $v_1$, so assume that there exists such a path from $v_1$ to $v_2$. Then $\gamma_2$, $\gamma_b$, and $\gamma_e$ all fail to be acyclic regardless of $\kappa$. For the others we split into cases:
\begin{itemize}
    \item If $\kappa(v_1) < \kappa(v_2)$, or if $\kappa(v_1) = \kappa(v_2)$ is negative, then $\gamma_1$ is valid and the equation is satisfied.
    \item If $\kappa(v_1) = \kappa(v_2)$ is positive, then with respect to $\gamma$ in $G-e$ every vertex along the directed path must have colour $\kappa(v_1)$. But one of these edges is singly directed, a contradiction, so this case never occurs.
\end{itemize}

We now consider the case in which $\gamma$ has no such path. We split into cases based on $\kappa(v_1)$ and $\kappa(v_2)$:
\begin{itemize}
    \item If these colours are different, without loss of generality assume $\kappa(v_1) < \kappa(v_2)$. Then only $\gamma_1$ and $\gamma$ are valid orientations, and clearly \eqref{eq:gammaxb} is satisfied.
    \item If $\kappa(v_1) = \kappa(v_2)$ is positive, then $\gamma_b$, $\gamma$, and $\gamma_e$ are valid orientations, $a_G(\gamma_b, \kappa) = (1+t)a_{G-e}(\gamma, \kappa)$, and $a_{G / e}(\gamma_e, \kappa) = t \cdot a_{G-e}(\gamma, \kappa)$, and \eqref{eq:gammaxb} is satisfied.
    \item If $\kappa(v_1) = \kappa(v_2)$ is negative, then all orientations are valid. We may easily see that $a_G(\gamma_1, \kappa) = a_G(\gamma_2, \kappa) = a_{G-e}(\gamma, \kappa)$. We have $a_G(\gamma_b, \kappa) = -(1+t)a_G(\gamma, \kappa)$ since it has an additional bidirected edge, and it also unified two components of $N(G-e,\gamma,\kappa)$ since by assumption the endpoints of $v_1v_2$ were not previously connected by a bidirected path. Furthermore, $t \cdot a_{G/e}(\gamma_e, \kappa_e) = -t \cdot a_{G-e}(\gamma, \kappa)$ since this time we removed a component of $N(G-e,\gamma,\kappa)$, and the equality holds.
\end{itemize}
Since our desired equation holds in all cases, the induction is completed, and our statement proved.

\end{proof}

Using this plethystic interpretation, we may easily generalize properties of the chromatic symmetric function to the Tutte symmetric function. For example, the following trivially follows from the same proof as of Lemma \ref{lem:graphadd}:

\begin{cor}\label{cor:graphaddxb}

$$
XB_{(G,w)}[f+g] = \sum_{A \sqcup B = V(G)} XB_{(G|_A, w|_A)}[f]XB_{(G|_B,w|_B)}[g]
$$
where $A$ and $B$ form a partition of $V(G)$, and $w|_A$ is the restriction of the weight function $w$ to only those vertices in $A$.
\end{cor}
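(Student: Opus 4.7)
The plan is to apply Theorem \ref{thm:xbpleth} to each side of the identity and exhibit a weight-preserving bijection in the same spirit as the proof of Lemma \ref{lem:graphadd}. On the left, I expand $XB_{(G,w)}[f+g]$ as a sum over pairs $(\gamma, \kappa)$, where $\gamma$ is an acyclic biorientation of $G$ and $\kappa : V(G) \to Var(f+g) = Var(f) \cup Var(g) \cup (Var(f) \cap Var(g))'$, each term weighted by $(1+t)^{|B(\gamma)|}(-1)^{c(N(G,\gamma,\kappa))}\prod_v \kappa(v)^{w(v)}$. On the right, I expand each $XB_{(G|_A,w|_A)}[f]$ and $XB_{(G|_B,w|_B)}[g]$ in the same way, obtaining a double sum indexed by partitions $V(G) = A \sqcup B$ together with pairs $(\gamma_A, \kappa_A)$ and $(\gamma_B, \kappa_B)$ valid for $(G|_A, w|_A)$ and $(G|_B, w|_B)$ respectively.

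As in the proof of Lemma \ref{lem:graphadd}, I fix a total ordering on $Var(f+g)$ that extends the chosen orderings on $Var(f)$ and $Var(g)$, with unprimed common variables inheriting their place from $Var(f)$ and primed copies from $Var(g)$. The bijection is then the obvious one: given $(\gamma, \kappa)$ on $G$, put $v \in A$ iff $\kappa(v)$ is an unprimed element of $Var(f)$, and restrict $\gamma$ to the induced subgraphs; in the reverse direction, glue $\gamma_A$ and $\gamma_B$ and orient every remaining cross-edge in the unique direction forced by the total ordering. The central observation is that every edge between $A$ and $B$ has endpoints coloured from disjoint label blocks, hence with distinct labels, so by the first bullet of Theorem \ref{thm:xbpleth} every cross-edge is singly directed. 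This immediately yields $|B(\gamma)| = |B(\gamma_A)| + |B(\gamma_B)|$ and a disjoint-union decomposition $N(G,\gamma,\kappa) = N(G|_A,\gamma_A,\kappa_A) \sqcup N(G|_B,\gamma_B,\kappa_B)$ (no bidirected edge crosses, and every negatively coloured vertex lies in exactly one part), so $c(N(G,\gamma,\kappa)) = c(N(G|_A,\gamma_A,\kappa_A)) + c(N(G|_B,\gamma_B,\kappa_B))$; combined with the trivial factorisation of $\prod_v \kappa(v)^{w(v)}$ over $A$ and $B$, this shows the weights multiply correctly.

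The only place I expect a minor subtlety is verifying that acyclicity of $\gamma$ is equivalent to acyclicity of both $\gamma_A$ and $\gamma_B$. One direction is immediate, since any directed cycle of $\gamma_A$ or $\gamma_B$ that uses a singly directed edge is also such a cycle in $\gamma$. For the converse, suppose $\gamma_A$ and $\gamma_B$ are acyclic and that $C$ is a directed cycle in $\gamma$ using at least one singly directed edge: along every edge of $C$ traversed in its directed sense, the $\kappa$-colour weakly increases (this is built into the three bullets of Theorem \ref{thm:xbpleth}), so because $C$ closes up the colour must be constant on $V(C)$; but any cross-edge has endpoints with distinct labels in our total ordering, so $C$ uses no cross-edge and lies entirely inside $G|_A$ or $G|_B$, contradicting acyclicity there. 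With this in hand the bijection is complete and the identity follows.
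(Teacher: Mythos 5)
Your proof is correct and follows exactly the approach the paper prescribes: the paper merely asserts that the corollary ``trivially follows from the same proof as of Lemma~\ref{lem:graphadd},'' and you have carried out that argument in full, correctly handling the new features specific to $XB$, namely that cross-edges are necessarily singly directed (so $|B(\gamma)|$ and $c(N(G,\gamma,\kappa))$ both split additively over $A$ and $B$) and that acyclicity in the biorientation sense transfers in both directions.
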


We also obtain an interpretation for the antipode of $XB_{(G,w)}$:
\begin{cor}\label{cor:antipodexb}
$$
XB_{(G,w)}[-\bx] = \sum_{S \subseteq E(G)} (1+t)^{|S|}X_{(G/S,w/S)}[-\bx]
$$
\end{cor}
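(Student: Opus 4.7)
The plan is to compute both sides in the power-sum basis and verify equality using a simple binomial identity; no combinatorial structure beyond the basis expansions already recorded in the excerpt is needed. First I would use the $p$-basis expansions
$$
XB_{(G,w)} = \sum_{S \subseteq E(G)} t^{|S|} p_{\lm(S)}, \qquad X_{(G,w)} = \sum_{S \subseteq E(G)} (-1)^{|S|} p_{\lm(S)}
$$
from Section 2 and apply $[-\bx]$ termwise, reducing the claim to an identity among the quantities $p_{\lm_G(U)}[-\bx]$ for $U \subseteq E(G)$.

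The second ingredient is the identification $E(G/S) \cong E(G)\setminus S$ together with the equality $\lm_{G/S}(T) = \lm_G(S\cup T)$ for any $T \subseteq E(G/S)$. This holds because contracting $S$ and then $T$ is the same operation as contracting $S\cup T$, so the connected components of $(V(G/S),T)$ correspond bijectively, with matching total vertex weights, to those of $(V(G), S\cup T)$. Substituting the expansion of $X_{(G/S,w/S)}[-\bx]$ into the right-hand side of the corollary yields
$$
\sum_{S \subseteq E(G)} (1+t)^{|S|} \sum_{T \subseteq E(G) \setminus S} (-1)^{|T|}\, p_{\lm_G(S \cup T)}[-\bx].
$$

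The main step is to swap the order of summation by setting $U = S \cup T$. For fixed $U \subseteq E(G)$, the inner sum becomes
$$
\sum_{S \subseteq U}(1+t)^{|S|}(-1)^{|U|-|S|} = \bigl((1+t)-1\bigr)^{|U|} = t^{|U|}
$$
by the binomial theorem. The right-hand side therefore collapses to $\sum_{U \subseteq E(G)} t^{|U|} p_{\lm_G(U)}[-\bx] = XB_{(G,w)}[-\bx]$, completing the proof. I do not anticipate a serious obstacle: the only point requiring care is the identification $\lm_{G/S}(T) = \lm_G(S\cup T)$, and in fact the argument goes through verbatim with $[-\bx]$ replaced by any $[f]$, so the restriction in the statement appears to reflect only the intended interpretation as an antipode formula rather than a limitation of the method.
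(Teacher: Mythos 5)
Your proof is correct, and it takes a genuinely different route from the paper. The paper's proof is combinatorial: it specializes the bioriented-colouring expansion of Theorem \ref{thm:xbpleth}, stratifies by the set $S$ of bidirected edges, observes that each connected component of $(V(G),S)$ must be monochromatic, and constructs a bijection between pairs $(\gamma,\kappa)$ for $G$ with bidirected edges exactly $S$ and pairs $(\gamma',\kappa')$ for $G/S$ with no bidirected edges, in the spirit of Lemma \ref{lem:graphadd}. Your argument instead works entirely in the power-sum basis: it uses the expansions $XB_{(G,w)}=\sum_U t^{|U|}p_{\lm(U)}$ and $X_{(G,w)}=\sum_U (-1)^{|U|}p_{\lm(U)}$, the compatibility $\lm_{G/S}(T)=\lm_G(S\cup T)$ under the natural identification $E(G/S)\cong E(G)\setminus S$, and the binomial identity $\sum_{S\subseteq U}(1+t)^{|S|}(-1)^{|U|-|S|}=t^{|U|}$. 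What this buys you is clarity about scope: your computation actually establishes the symmetric-function identity
\[
XB_{(G,w)}=\sum_{S\subseteq E(G)}(1+t)^{|S|}\,X_{(G/S,w/S)}
\]
outright, with the plethysm $[-\bx]$ (or any $[f]$) applied only afterward by linearity over $\mathbb{C}[t]$, so the restriction to $[-\bx]$ in the statement is indeed cosmetic. The one small point worth making explicit is that when $S$ is contracted, some remaining edges may become loops or multi-edges in $G/S$; this is harmless because $\lm_{G/S}(T)$ depends only on the connected components of $(V(G/S),T)$, and the $p$-expansion of $X$ is valid for non-simple graphs (loops cause cancellation to zero, consistent with $X_{(G/S,w/S)}=0$ in that case). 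The paper's combinatorial proof gives a bijective explanation and fits the thematic arc of the section; your algebraic proof is shorter, more elementary, and makes the generality transparent.
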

\begin{proof}
We split the sum in Theorem \ref{thm:xbpleth} by the set $S$ of bidirected edges. Once $S$ is fixed, note that each connected component of $(V(G),S)$ must be monochromatic; then it is easy to see that each choice of $(\gamma, \kappa)$ such that $\gamma$'s bidirected edges are exactly $S$ are in one-to-one correspondence with choices of $(\gamma, \kappa)$ on $(G/S,w/S)$ admitting no bidirected edges in the same manner as in the proof of Lemma \ref{lem:graphadd}.
\end{proof}

\section{Further Directions}

As noted in the introduction, it is a major open question to evaluate general plethysms of the form $s_{\lm}[s_{\mu}]$. There are many known ways to expand the input $s_{\mu}$ so as to provide an interesting variable set for coloring \cite{stanleybook}. Although in general $s_{\lm}$ cannot be represented as the chromatic symmetric function of a single vertex-weighted graph (see \cite{choclass} and \cite[Footnote 1]{delcon}), Schur-function basis expansions of chromatic symmetric functions are an ongoing topic of research \cite{epos, wang}. Additionally, it would still be helpful if $s_{\lm}$ could be represented as a linear combination of chromatic symmetric functions in some natural way, and to the best of the authors' knowledge this is a topic that has not been previously explored.

Given this graph-theoretic interpretation for plethysm, it is natural to consider if related operations and constructions in algebraic combinatorics can be represented graph-theoretically by further extension. It was already known that unicellular LLT polynomials are chromatic quasisymmetric functions of unit interval graphs \cite{huhmelt}; recent work of Tom \cite{tom, tom2} has gone further, interpreting horizontal-strip LLT polynomials in terms of colorings and orientations of certain vertex-weighted graphs in a manner similar to that of Theorem \ref{thm:xpleth}. It seems very likely that continuing in a direction unifying these approaches would continue to shed light on LLT polynomials and their positivity conjectures. 

Finally, there are ways of defining modified Macdonald polynomials, through plethysm \cite[Section 3]{descouens} and through fillings of tableaux or similar objects \cite{hhl, colorful}. It would be interesting to determine if there is a graph-theoretic interpretation for Macdonald polynomials and related functions that provides another avenue for investigation.

\section{Acknowledgments}

We would like to thank Foster Tom for helpful discussions. We would like to thank Darij Grinberg and the anonymous referee for helpful suggestions that provided further information and greatly improved the clarity of this work in many places.

We acknowledge the support of the Natural Sciences and Engineering Research Council of Canada (NSERC), [funding
reference number RGPIN-2020-03912]. Cette recherche a \'et\'e financ\'ee par le Conseil de recherches en sciences naturelles et en g\'enie du Canada (CRSNG),
[num\'ero de r\'ef\'erence RGPIN-2020-03912].

\bibliographystyle{plain}
\bibliography{bib}

\end{document}